\newtheorem{theorem}{Theorem}[section] 
\newtheorem{lemma}[theorem]{Lemma}     
\newtheorem{corollary}[theorem]{Corollary}
\newtheorem{proposition}[theorem]{Proposition}
\newcommand{\bb}[1]{\mathbb{#1}}
\newcommand{\abs}[1]{\vert#1\vert}
\renewcommand{\cal}[1]{\mathcal{#1}}
\newcommand{\roi}{\mathcal{O}}
\newcommand{\res}[1]{\overline{#1}}
\newcommand{\al}{\alpha}
\newcommand{\comment}[1]{}
\newcommand{\mult}[1]{#1^{\times}}
\newcommand{\w}{\omega}
\newcommand{\sub}[1]{{\mbox{\scriptsize {#1}}}}
\newcommand{\ul}[1]{{\underline{#1}}}
\DeclareMathOperator{\Char}{char}
\title[Fubini's theorem over a two-dimensional local field]{Fubini's theorem and non-linear changes of variables over a two-dimensional local field}
\author{Matthew Morrow}
\begin{document}

\maketitle

\begin{abstract}
We consider non-linear changes of variables and Fubini's theorem for certain integrals over a two-dimensional local field. An interesting example is presented in which imperfectness of a positive characteristic local field causes Fubini's theorem to unexpectedly fail. The relationship to ramification theory is discussed.
\end{abstract}

\section{Introduction}

\subsection{Summary}
This paper considers the issue of Fubini's theorem and non-linear polynomial changes of variables for integration over a two-dimensional local field (that is, a complete discrete valuation field $F$ with residue field $\res{F}$ a usual local field). Integration over such fields was first considered by Fesenko \cite{Fesenko2003}, then by Kim and Lee \cite{Lee2005}. Hrushovski and Kazhdan \cite{Hrushovski2006} \cite{Hrushovski2008} then developed a very general integration theory on valued fields using model theory; their methods are valid for residue characteristic zero (and sufficiently large positive characteristic). The author \cite{Morrow2008} has reformulated Fesenko's work to provide a new perspective which appears to be useful in treating problems such as Fubini's theorem.

A satisfactory theory of translation-invariant integration on algebraic groups over two dimensional local fields should lead to significant advances in the representation theory of such groups. Kim and Lee consider this problem in \cite{Lee2005} and construct a translation-invariant measure on $\mbox{GL}_n(F)$. The methods of Hrushovski and Kazhdan yield all main results under the restriction on the characteristic. The author extended \cite{Morrow2008a} his methods of \cite{Morrow2008} to obtain a translation invariant integral on $\mbox{GL}_n(F)$ which lifts in a natural sense the usual Haar integral on $\mbox{GL}_n(\res{F})$.

A relation between this theory and quantum physics is evidenced by the two-dimensional local field $\bb{C}((X))$. Subspaces of this space such as $\bb{C}[X]$ may be identified with subspaces of the space of continuous paths in the plane. The Feynman integral is not understood rigorously (see \cite{Johnson2000} for discussion of the problems) and measure theory on these path spaces may help to explain it. Further evidence of the relations between quantum field theory and the measure on these fields may be found in sections 16, 18 of \cite{Fesenko2006} and example 5.6 of \cite{Morrow2008}.

To extend the approach in \cite{Morrow2008a} from $\mbox{GL}_n$ to an arbitrary algebraic group it is necessary to have a theory of integration on finite dimensional vector spaces over $F$ which behaves well under certain \emph{non-linear} changes of variable (for the $\mbox{GL}_n$ theory, linear changes of variable suffice). Moreover, for use in applications, it is essential that Fubini's theorem concerning repeated integrals is valid. This paper considers the problem of establishing whether the equality \[\int^F\int^F g(x,y-h(x))\,dydx=\int^F\int^F g(x,y-h(x))\,dxdy\] holds for appropriate functions $g$ and polynomials $h$. Moreover, the methods used appear to be suitable for changes of variables much more general than $(x,y)\mapsto (x,y-h(x))$.
In the first section of the paper we summarise the integration theory of \cite{Morrow2008}. Given a Haar integrable function $f:\res{F}\to\bb{C}$, let $f^0$ be the lift of $f$ to $F$ which vanishes outside $\roi_{F}$ and satisfies $f^0(x)=f(\res{x})$ for $x\in\roi_{F}$; functions expressible as finite sums of functions of the form \[x\mapsto f^0(\al x+a)\] for $\al\in\mult{F}$, $a\in F$, $f$ Haar integrable on $\res{F}$, are said to be integrable on $F$. A translation invariant $\bb{C}(X)$-valued integral $\int^F$ is defined for such functions.

In the second section we discuss the extension of this integration theory to $F\times F$. Given a Haar integrable function $f:\res{F}\times\res{F}\to\bb{C}$, let $f^0$ be the lift to $F\times F$ which vanishes outside $\roi_{F}\times\roi_{F}$. Taking translations and scaling of such function by $F\times F$ and $\mult{F}\times\mult{F}$ obtains a space of functions similar to the analogous space for $F$. However, is appears that there are many more functions $\Phi:F\times F\to \bb{C}$ for which \[\int^F\int^F\Phi(x,y)\,dxdy=\int^F\int^F\Phi(x,y)\,dydx;\] that is, for which both repeated integrals make sense according to the integration theory of $F$ and the two repeated integrals have the same value.

The third section describes the action of polynomials on $F$. Given a polynomial $h\in\roi_{F}[X]$, and a translated fractional ideal $b+t^A\roi_{F}\subseteq\roi_{F}$, we show how to write $\{x\in\roi_{F}:h(x)\in b+t^A\roi_{F}\}$ as a finite disjoint union of translated fractional ideals; here $t$ is a local parameter of $F$ as a discrete valuation field. If $a+t^c\roi_{F}$ is one of these translated fractional ideals, it is also important to understand the behaviour of the function \[h:a+t^c\roi_{F}\to b+t^A\roi_{F}.\] These results (indeed, the whole paper) are closely related to ramification theory of two-dimensional local fields through A.~Abbes and T.~Saito's theory of ramification for complete discrete valuation fields with imperfect residue field; a summary of their theory is given towards the end of section three, and further relations explained throughout the remainder of the paper.

The impetus of this paper is conjecture \ref{conjecture}, which we rapidly reduce to the following: if $f$ is a Schwartz-Bruhat function on $\res{F}\times\res{F}$, $f^0$ is defined as above, and $h\in F[X]$ is a polynomial, then surely \[\int^F\int^F f^0(x,y-h(x))\,dydx=\int^F\int^F f^0(x,y-h(x))\,dxdy.\] In section five the conjecture is shown to be true if $h$ is linear or if all coefficients of $h$ belong to $\roi_{F}$.

The technically difficult case of when $h$ contains coefficients not in $\roi_{F}$ is taken up in section six. Introduce a polynomial $q\in F[X]$ and integer $R<0$ by the three conditions $h(X)=h(0)+t^Rq(X)$, $q\in\roi_{F}[X]$, and $q\notin t\roi_{F}[X]$. We give explicit expressions for the integral of $\int^F f^0(x,y-h(x))\,dx$ in terms of the decomposition of sets of the form $\{x\in\roi_{F}:q(x)\in b+t^{-R}\roi_{F}\}$; the conjecture easily follows if $R=-1$ so long as $\res{q}$, the image of $q$ in $\res{F}[X]$, is not a purely inseparable polynomial. When $R<-1$ calculations become difficult, and the function $y\mapsto\int^F f^0(x,y-h(x))\,dx$ can fail to be integrable, meaning that the conjecture fails; however, we present examples suggesting that the space of integrable functions could be extended so as to remedy this deficit.

We then consider the possibility that $\res{F}$ has positive characteristic and $\res{q}$ is purely inseparable. When $R=-1$ it is shown that \[\int^F\int^F f^0(x,y-h(x))\,dydx=\int_{\res{F}}\int_{\res{F}} f(x,y)\,dydx\] but \[\int^F\int^F f(x,y-h(x))\,dxdy=0.\] So if $f$ has non-zero Haar integral over $\res{F}\times\res{F}$ then the conjecture drastically fails. This fascinating result provides an explicit example to show that the work of Hrushovski and Kazhdan really can fail in positive characteristic, and we discuss its relationship with ramification theory.

In the final section we summarise the results obtained and discuss future work.

\subsection{Acknowledgements et al.}
The bulk of this paper was written during the first half of my Ph.D. studies under I.~Fesenko, and I am grateful to him for many interesting discussions surrounding ad\`elic analysis. Later, G.~Yamashita kindly read the first version of this paper in detail and introduced me to the ramification theory of A.~Abbes and T.~Saito, which I began to study while travelling abroad in the final year of my PhD. These visits abroad would not have been possible without the generosity of the Cecil King Foundation and London Mathematical Society, who funded me through the Cecil King Travel Scholarship. 

Chapter $4$ of my thesis \cite{Morrow_thesis} consists mainly of the material of this paper, but with additional comments; the reader is recommended to consult that source in conjunction with the current paper.

\subsection{Notation}
$F$ is a complete discrete valuation field whose residue field $K=\res{F}$ is a local field ($\bb{R}$, $\bb{C}$, or non-archimedean). We fix a prime $t$ of $F$ and denote its ring of integers by $\roi_{F}$. The residue map $\roi_{F}\to\res{F}$ is denoted $x\mapsto\res{x}$; the discrete valuation is denoted $\nu:F\to\bb{Z}\cup\{\infty\}$. We fix a Haar measure on $K$.

\section{Integration on $F$}
In \cite{Morrow2008} a theory of integration on $F$ taking values in the field of rational functions $\bb{C}(X)$ is developed. We repeat here the definitions and main results:

\begin{definition}
Let $\cal{L}$ denote the space of complex-valued Haar integrable functions on $K=\res{F}$. Let $\cal{L}(F)$ denote the smallest $\bb{C}(X)$ space of all $\bb{C}(X)$-valued functions on $F$ with the following properties:
\begin{enumerate}
\item For all $f\in\cal{L}$, $\cal{L}(F)$ contains
\[f^0:F\to \bb{C}(X),\quad x\mapsto \begin{cases}
	f(\res{x}) & x\in\roi_{F},\\
	0 & \mbox{otherwise;}
	\end{cases}
\]
\item if $g\in\cal{L}(F)$ and $a\in F$ then $\cal{L}(F)$ contains $x\mapsto g(x+a)$;
\item if $g\in\cal{L}(F)$ and $\al\in\mult{F}$ then $\cal{L}(F)$ contains $x\mapsto g(\al x)$.
\end{enumerate}
Functions in $\cal{L}(F)$ are said to be \emph{integrable} on $F$. 
\end{definition}

\begin{remark}
For $f\in\cal{L}$, $a\in F$, and $n\in\bb{Z}$, the \emph{lift of $f$ at $a,n$}, denoted $f^{a,n}$, is the complex valued function of $F$ defined by $f^{a,n}(a+t^nx)=f^0(x)$ for all $x\in F$.

It is easy to see that $\cal{L}(F)$ is the $\bb{C}(X)$-space spanned by $f^{a,n}$ for $f\in\cal{L}$, $a\in F$, $n\in\bb{Z}$.
\end{remark}

The following is fundamental:
\begin{theorem}\label{theorem_main_properties_of_integral}
There is a unique $\bb{C}(X)$-linear functional $\int^F$ on $\cal{L}(F)$ which satisfies:
\begin{enumerate}
\item $\int^F$ lifts the Haar integral on $\res{F}$: for $f\in\cal{L}$, \[\int^F(f^0)=\int_K f(u)\,du.\]
\item Translation invariance: for $g\in\cal{L}(F)$, $a\in F$, \[\int^F g(x+a)\,dx=\int^F g(x)\,dx.\]
\item Compatibility with multiplicative structure: for $g\in\cal{L}(F)$, $\al\in\mult{F}$, \[\int^F g(\al x)\,dx=\abs{\al}^{-1}\int^F g(x)\,dx.\]
\end{enumerate}
Here the \emph{absolute value} of $\al\in\mult{F}$ is defined by $\abs{\al}=\left|\res{\al t^{-\nu(\al)}}\right|_KX^{\nu(\al)}$, and we have adopted the customary integral notation $\int^F(g)=\int^F g(x)\,dx$.
\end{theorem}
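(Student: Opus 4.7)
The plan is to treat uniqueness and existence separately: the former is immediate from the Remark, while the latter reduces to a well-definedness check. For uniqueness, the Remark identifies $\cal{L}(F)$ as the $\bb{C}(X)$-span of the lifts $f^{a,n}$, so $\int^F$ is determined by its values there. Writing $f^{a,n}(y)=f^0(t^{-n}(y-a))$, axiom (ii) drops the translation by $a$, then (iii) with $\al=t^{-n}$ pulls out $\abs{t^{-n}}^{-1}=X^n$, and (i) replaces $\int^F f^0$ by $\int_K f$; the outcome is
\[
\int^F f^{a,n}\;=\;X^n\int_K f(u)\,du,
\]
which uniquely determines $\int^F$ on the generators, and $\bb{C}(X)$-linearity extends uniqueness to all of $\cal{L}(F)$.

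For existence I would declare $\int^F f^{a,n}:=X^n\int_K f$ on the spanning set and extend $\bb{C}(X)$-linearly; the real content is to verify that any relation $\sum_i c_i f_i^{a_i,n_i}\equiv 0$ among functions on $F$ forces $\sum_i c_i X^{n_i}\int_K f_i=0$ in $\bb{C}(X)$. My approach is to exploit the ultrametric tree structure: the supports $\{a_i+t^{n_i}\roi_{F}\}$ form a finite tree under inclusion, and on any minimal ball $B=a_r+t^{n_r}\roi_{F}$ the restricted sum splits into a lift piece (from those $i$ with $a_i+t^{n_i}\roi_{F}=B$, which under the bijection $B\leftrightarrow K$ corresponds to an $L^1$ function $g_B(w)=\sum c_if_i(w-u_i)$) plus constants (from those $i$ whose support strictly contains $B$, since $\res{t^{-n_i}(y-a_i)}$ is then independent of $y\in B$). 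The pointwise vanishing of the total sum on $B$ forces $g_B$ to equal the constant piece; and because $K$ has infinite Haar measure, an $L^1$ function everywhere equal to a constant must be the zero function, so $g_B\equiv 0$ and the constant vanishes separately. The first conclusion contributes $\sum c_i X^{n_r}\int_K f_i=0$ (over the lift-piece indices, by translation invariance of $\int_K$), and the second gives a new pointwise relation involving only strictly larger balls; induction on the depth of the tree closes the argument.

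The principal obstacle is the step ``$L^1=$ constant implies $0$''---the point at which one genuinely uses that $K$ has infinite Haar measure, which holds for all $K$ admitted here ($\bb{R}$, $\bb{C}$, or a non-archimedean local field) but would fail for compact residue fields. Another subtlety is that the constant-on-sub-ball pieces need not themselves lie in $\cal{L}(F)$, but the argument never integrates them in isolation: they only enter as contributions to pointwise cancellation. With well-definedness in hand, axiom (i) is built into the formula, axiom (ii) is immediate since the formula does not see $a$, and axiom (iii) is checked by writing $\al=\epsilon t^m$ with $\epsilon\in\mult{\roi_{F}}$, identifying $f^{a,n}(\al y)=g^{\al^{-1}a,n-m}(y)$ for $g(w)=f(\res{\epsilon}w)$, and applying the Haar scaling $\int_K f(\res{\epsilon}w)\,dw=\abs{\res{\epsilon}}_K^{-1}\int_K f$ to recover the required factor $\abs{\al}^{-1}=X^{-m}\abs{\res{\epsilon}}_K^{-1}$.
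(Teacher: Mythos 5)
The paper does not prove this theorem; the proof offered is only a citation to the reference where the integral is first constructed, so there is no in-paper argument to compare against. Read on its own terms your proposal is essentially correct and is the natural route: uniqueness as you state it, and for existence the well-definedness check via the ultrametric forest of supports, isolating a minimal ball $B$, splitting the pointwise relation on $B$ into an ``$L^1$ lift piece'' plus a constant tail from strictly larger balls, and using the infinite Haar measure of $K$ to force each piece to vanish separately. Your verifications of the three axioms at the end are also correct.

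A few points to tighten. The coefficients $c_i$ lie in $\bb{C}(X)$, so $g_B$ is $\bb{C}(X)$-valued and is not literally an $L^1$ function; before invoking ``$L^1$ and everywhere constant forces zero'' you should expand the finitely many $c_i$ in a $\bb{C}$-basis of the finite-dimensional $\bb{C}$-subspace they span and argue coordinate-by-coordinate with genuine $\bb{C}$-valued integrable functions, deducing the target identity $\sum_i c_i X^{n_i}\int_K f_i=0$ coordinate-wise as well. Also, ``induction on the depth of the tree'' is not quite the right parameter: discarding the indices supported on $B$ can expose new leaves, and other minimal balls may persist at the same depth; inducting on the number of distinct supports, which strictly decreases at each step, is cleaner. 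Finally, your parenthetical worry is unnecessary: the constant piece on $B$ is a $\bb{C}(X)$-multiple of $\Char_B=\Char_{\{0\}}^{a_r,n_r-1}$, which does lie in $\cal{L}(F)$ and has integral zero (compare example \ref{eg_null_functions}); the real point, which your pointwise argument correctly captures, is that integrating the restricted relation over $B$ gives only $0=0$ and is uninformative, so the pointwise separation you use is indeed essential.
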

\begin{proof}
See \cite{Morrow2008}.
\end{proof}

\begin{remark}\label{remark_integral_of_lifted_function}
If $f^{a,n}$ is as in the previous remark then $\int^F f^{a,n}(x)\,dx=\int f(u)\,du\,X^n$.
\end{remark}

\begin{example}\label{eg_null_functions}
This example demonstrates some unusual behaviour of the integral.
\begin{enumerate}
\item We will first calculate the measure of $\roi_{F}$. Note that $\Char_{t\roi_{F}}=\Char_{\{0\}}^0$, and so
\begin{align*}
	\int^F \Char_{\roi_{F}}(x)\,dx
	&=\int^F \Char_{t\roi_{F}}(tx)\,dx\\
	&=X^{-1}\int^F \Char_{t\roi_{F}}(x)\,dx\\
	&=X^{-1}\int_K\Char_{\{0\}}(u)\,du\\
	&=0.
	\end{align*}
\item For a second example, introduce $g_1=\Char_{t\roi_{F}}$, and $g_2=-2\Char_{t\{x\in\roi_{F}:\,\res{x}\in S\}}$ where $S$ is a Haar measurable subset of $\res{F}$ of measure $1$; let $g=g_1+g_2$. These functions are all complex-valued so it makes sense to consider their absolute values (here $\abs{\cdot}$ denotes the usual absolute value on $\bb{C}$, not the absolute value on $F$ introduced in the previous theorem). The following identities hold:
\begin{align*}
	\int^F \abs{g(x)} \,dx=\int^F\abs{g_1(x)}\,dx&=0\\
	\int^F g(x)\,dx=\int^F \abs{g(x)-g_1(x)}\,dx&=-2X.
\end{align*}
Proofs of these identities may be found in \cite{Morrow2008} example 1.9.
\end{enumerate}
\end{example}
%
%
\section{Integration on $F\times F$}
In this section we briefly explain repeated integration on $F\times F$; more details are available in \cite{Morrow2008a}

\begin{definition}
A $\bb{C}(X)$-valued function $g$ on $F\times F$ is said to be \emph{Fubini} if and only if both its repeated integrals exist and are equal. That is, we require: 
\begin{enumerate}
\item for all $x\in F$, the function $y\mapsto g(x,y)$ is integrable, and then that the function $x\mapsto \int^F g(x,y)\,dy$ is also integrable;
\item for all $y\in F$, the function $x\mapsto g(x,y)$ is integrable, and then that the function $y\mapsto \int^F g(x,y)\,dx$ is also integrable;
\item $\int^F\int^F g(x,y)\,dxdy = \int^F\int^F g(x,y)\,dydx$.
\end{enumerate}
Similarly, an integrable complex valued function $f$ on $K\times K$ will be called \emph{Fubini} if and only if both its repeated integrals exist and are equal.
\end{definition}

\begin{remark}
Recall that the existence and equality of the repeated integrals of a complex valued function on $K\times K$ does not imply its integrability on $K\times K$ (see e.g. \cite[example 8.9c]{Rudin1987}) which is why we have separately imposed that condition. However, in our applications we will restrict to well enough behaved functions for this subtle problem to be irrelevant.

Fubini's theorem implies that almost all (in the sense of failing off a set of measure of zero) the horizontal and vertical sections of any integrable function on $K\times K$ are integrable. Therefore any integrable function on $K\times K$ differs off a null set from some Fubini function. However, there is no satisfactory theory of lifting null sets from $K$ to $F$, so we restrict attention to Fubini functions on $K\times K$.

Any function in the Schwartz-Bruhat space of $K\times K$ is Fubini; recall that if $K$ is archimedean these are the smooth functions of rapid decay at infinity, and if $K$ is non-archimedean these are the locally constant functions of compact support.
\end{remark}

The main properties of the collection of Fubini functions on $F\times F$ are the following:
\begin{proposition}\label{proposition_properties_of_fubini_functions}
The collection of Fubini functions on $F\times F$ is a $\bb{C}(X)$-space with the following properties:
\begin{enumerate}
\item If $g$ is Fubini on $F\times F$, then so is $(x,y)\mapsto g(\al_1 x+a_1,\al_2 y+a_2)\,X^n$ for any $a_i\in F$, $\al_i\in\mult{F}$, $n\in\bb{Z}$, with repeated integral \begin{align*}\int^F\int^F g(\al_1 x+a_1,&\al_2 y+a_2)\,X^n\,dxdy\\&=\abs{\al_1}^{-1}\abs{\al_2}^{-1}\int^F\int^F g(x,y)\,dxdy X^n.\end{align*}
\item If $f$ is Fubini on $K\times K$, then \[f^0(x,y):=\begin{cases} f(\res{x},\res{y})&x,y\in\roi_{F},\\0&\mbox{otherwise,}\end{cases}\] is Fubini on $F\times F$, with repeated integral \[\int^F\int^F f^0(x,y)\,dxdy=\int_K\int_K f(u,v)\,dudv.\]
\end{enumerate}
\end{proposition}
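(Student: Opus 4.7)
The plan is to reduce everything to the single-variable Theorem~\ref{theorem_main_properties_of_integral} and the closure properties built into the definition of $\cal{L}(F)$. The fact that the Fubini functions form a $\bb{C}(X)$-space is immediate from the $\bb{C}(X)$-linearity of $\int^F$ applied inside each of the three Fubini conditions.

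For part (i) I would unfold the repeated integral of the transformed function from the inside out. For each fixed $x\in F$, integrability in $y$ of $y\mapsto g(\al_1 x+a_1,\al_2 y+a_2)$ follows from properties (ii)--(iii) of $\cal{L}(F)$ applied to the slice $y\mapsto g(\al_1 x+a_1,y)$, which is integrable by Fubini of $g$; translation invariance and the scaling rule of Theorem~\ref{theorem_main_properties_of_integral} then evaluate the inner integral as $\abs{\al_2}^{-1}X^n\int^F g(\al_1 x+a_1,y)\,dy$. The same two steps, applied to the $F$-integrable function $x\mapsto\int^F g(x,y)\,dy$ furnished by Fubini of $g$, complete the outer integration and give $\abs{\al_1}^{-1}\abs{\al_2}^{-1}X^n\int^F\int^F g(x,y)\,dy\,dx$. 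Performing the analogous computation in the reversed order yields $\abs{\al_1}^{-1}\abs{\al_2}^{-1}X^n$ times the other iterated integral of $g$, and the desired equality is inherited from Fubini of $g$.

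For part (ii) the key observation is that for each fixed $x\in F$ the slice $y\mapsto f^0(x,y)$ is precisely $\Char_{\roi_{F}}(x)\cdot(f(\res{x},\cdot))^0(y)$, i.e.\ an indicator in $x$ times the canonical lift of the $K$-integrable function $v\mapsto f(\res{x},v)$. Theorem~\ref{theorem_main_properties_of_integral}(i) then evaluates the inner integral as $\Char_{\roi_{F}}(x)\int_K f(\res{x},v)\,dv=\phi^0(x)$, where $\phi(u):=\int_K f(u,v)\,dv$ is $K$-integrable by Fubini of $f$. A second application of Theorem~\ref{theorem_main_properties_of_integral}(i) gives $\int^F\phi^0\,dx=\int_K\int_K f(u,v)\,dv\,du$; symmetric treatment of the other iteration produces $\int_K\int_K f(u,v)\,du\,dv$, and these agree by $K$-Fubini of $f$.

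The main---mild---subtlety arises at the start of (ii): the $F\times F$ Fubini condition demands integrability of \emph{every} slice, not merely almost every one, so this must also be implicitly required of the $K\times K$ hypothesis on $f$ (automatic for the Schwartz--Bruhat functions appearing in applications, as the preceding remark records). Beyond this the argument is essentially a bookkeeping exercise, the real content being supplied by Theorem~\ref{theorem_main_properties_of_integral}.
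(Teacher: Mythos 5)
Your proposal is correct and is the natural unwinding of the definitions — which is precisely what the paper means when it calls the proof "straightforward" and defers the details to \cite{Morrow2008a}: close the inner slice under the affine change and $X^n$-multiplication via the defining closure properties of $\cal{L}(F)$, evaluate the inner integral with Theorem~\ref{theorem_main_properties_of_integral}(ii)--(iii), repeat for the outer variable, and then use the corresponding manipulation of Theorem~\ref{theorem_main_properties_of_integral}(i) together with $K$-Fubini for part (ii). Your closing observation about slice integrability is already built into the paper's notion of Fubini on $K\times K$ (it is defined by analogy with the $F\times F$ case, which demands integrability of \emph{all} slices), so the hypothesis supplies exactly what you need and no gap remains.
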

\begin{proof}
The proof is straightforward; it may be found in \cite{Morrow2008a}.
\end{proof}

\begin{remark}\label{remark_repeated_integral_of_lifted_function}
The proposition implies that if $f$ is Fubini on $K\times K$, $a_1,a_2\in F$, $n_1,n_2\in\bb{Z}$, then the function $g=f^{(a_1,a_2),(n_1,n_2)}$ of $F\times F$ defined by \[f^{(a_1,a_2),(n_1,n_2)}(a_1+t^{n_1}x,a_2+t^{n_2}y)=f^0(x,y)\] for all $x,y\in F$ is Fubini. The function $g$ is said to be the \emph{lift of $f$ at $(a_1,a_2),(n_1,n_2)$}.

Proposition \ref{proposition_properties_of_fubini_functions} implies \[\int^F\int^F g(x,y)\,dxdy=\int^F\int^F g(x,y)\,dydx=\int_K\int_K f(u,v)\,dudv\,X^{n_1+n_2}.\]
\end{remark}


\section{Decompositions and ramification}\label{section_decompositions_and_ramificiation}

In this section we investigate the action of polynomials on $F$ and the relations with ramification theory; the results hold for any Henselian discrete valuation field $F$ with infinite residue field:

\begin{lemma}
Suppose $h(X)$ is a polynomial over $F$, that $a+t^c\roi_{F}$, $b+t^A\roi_{F}$ are two translated fractional ideals, and that $h(a+t^c\roi_{F})\subseteq b+t^A\roi_{F}$. Then there is a unique polynomial $\psi\in K [X]$ which gives a commutative diagram
\[\begin{CD}
a+t^c\roi_{F} @>h>> b+t^A\roi_{F} \\
@V{a+t^c x\mapsto\res{x}}VV  @VV{b+t^Ax\mapsto\res{x}}V  \\
 K  @>>\psi>  K .\\
\end{CD}\]
Moreover, $\deg\psi\le\deg h$.
\end{lemma}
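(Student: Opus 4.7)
The plan is to construct $\psi$ as the coefficient-wise reduction modulo $t$ of the polynomial
\[
g(X) := t^{-A}\bigl(h(a + t^c X) - b\bigr) \in F[X],
\]
which has degree exactly $d := \deg h$. The hypothesis $h(a+t^c\roi_F) \subseteq b + t^A\roi_F$ rephrases as $g(\roi_F) \subseteq \roi_F$. The only substantive step is to upgrade this set-theoretic containment to the algebraic statement $g \in \roi_F[X]$; once that is done, setting $\psi := \res{g}$ will make the diagram commute and give the degree bound immediately.

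To show $g \in \roi_F[X]$, I exploit that $K$ is infinite: pick $x_0, \dots, x_d \in \roi_F$ whose reductions are pairwise distinct in $K$. The Vandermonde matrix $V = (x_i^j)_{0 \le i,j \le d}$ then has determinant $\prod_{i<j}(x_j - x_i)$, whose reduction is the nonzero Vandermonde determinant formed from the $\res{x_i}$, so $\det V \in \mult{\roi_F}$ and $V^{-1} \in M_{d+1}(\roi_F)$. The coefficients of $g$ are the unique solution over $F$ of the linear system $V \cdot (c_0,\dots,c_d)^T = (g(x_0),\dots,g(x_d))^T$; since the right-hand side lies in $\roi_F^{d+1}$, the coefficients lie in $\roi_F$.

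With $\psi := \res{g}$, commutativity of the diagram is routine: for $x \in \roi_F$ one has $h(a+t^c x) = b + t^A g(x)$, whose image under the right-hand vertical map is $\res{g(x)} = \psi(\res{x})$, which equals $\psi$ applied to the left-hand vertical image of $a + t^c x$. Uniqueness again uses that $K$ is infinite: two polynomials in $K[X]$ agreeing as functions on $K$ must be equal, so the diagram pins down $\psi$. The degree bound $\deg\psi \le \deg g = \deg h$ holds because coefficient-wise reduction modulo $t$ can only decrease the degree. The only real obstacle is the integrality of $g$; this is also the only point where the infinitude of $K$ is needed, after which everything is formal.
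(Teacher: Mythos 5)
Your proof is correct, and it takes a genuinely different route from the paper's. The paper normalises $h(a+t^cX)$ as $h(a)+t^RH(X)$ with $H\in\roi_F[X]$, $\res{H}\neq 0$, no constant term, and then argues by contradiction (using that a non-constant $\res{H}$ cannot be constant as a function on the infinite field $K$) that the hypothesis forces $A\le R$; the two cases $A=R$ and $A<R$ are then handled explicitly, producing formulas for $\psi$. Your argument instead goes straight for the algebraic content: the set-theoretic containment $g(\roi_F)\subseteq\roi_F$ plus a Vandermonde/Lagrange interpolation argument over $\roi_F$ (valid because $\det V$ is a unit once the $\res{x_i}$ are distinct) shows $g\in\roi_F[X]$ directly, after which $\psi=\res{g}$ is forced. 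Both proofs use the infinitude of $K$, but in different places: the paper uses it to rule out $A>R$, you use it to find $\deg h + 1$ elements of $\roi_F$ with pairwise distinct residues. What your version buys is uniformity (no case split and a cleaner conceptual reason why $g$ is integral); what the paper's version buys is the explicit description of $\psi$ in terms of $R$, $A$, and $\res{H}$, which is then reused throughout the later decomposition results. Both are complete and sound.
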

\begin{proof}
There is certainly at most one function $\psi$ making the diagram commute; but $ K $ is an infinite field so if two polynomials are equal as functions then they are equal as polynomials. So there can be at most one polynomial $\psi$.

We may write $h(a+t^cX)=h(a)+t^RH(X)$ where $H\in\roi_{F}[X]$ is a polynomial with integer coefficients, no constant term, and with non-zero image in $ K [X]$ (i.e. not all coefficients of $H$ are in $t\roi_{F}$). We clearly have a commutative diagram
\[\begin{CD}
a+t^c\roi_{F} @>h>> h(a)+t^R\roi_{F} \\
@V{a+t^c x\mapsto\res{x}}VV  @VV{h(a)+t^Rx\mapsto\res{x}}V  \\
 K  @>>\res{H}>  K ,\\
\end{CD}\]
where $\res{H}$ denotes the image of $H$ in $ K [X]$.

If $A>R$ then the inclusion $h(a+t^c\roi_{F})\subseteq b+t^A\roi_{F}$ implies $\res{H}$ is everywhere equal to $\res{(b-h(a))t^{-R}}$; but $ K $ infinite then implies $\res{H}$ is a constant polynomial and hence is zero (since $H$ has no constant term), which is a contradiction. Hence $A\le R$, and we may easily complete the proof:

If $A=R$ then the desired commutative diagram is
\[\begin{CD}
a+t^c\roi_{F} @>h>> b+t^A\roi_{F} \\
@V{a+t^c x\mapsto\res{x}}VV  @VV{b+t^Ax\mapsto\res{x}}V  \\
 K  @>>\res{H}+\res{(h(a)-b)t^{-A}}>  K ,\\
\end{CD}\]
where the lower horizontal map is the function $u\mapsto\res{H}(u)+\res{(h(a)-b)t^{-A}}$. If $A<R$ then the desired diagram is
\[\begin{CD}
a+t^c\roi_{F} @>h>> b+t^A\roi_{F} \\
@V{a+t^c x\mapsto\res{x}}VV  @VV{b+t^Ax\mapsto\res{x}}V  \\
 K  @>>\res{(h(a)-b)t^{-A}}>  K ,\\
\end{CD}\]
where the lower horizontal map is the constant function $u\mapsto\res{(h(a)-b)t^{-A}}$.
\end{proof}

\begin{definition}
Suppose $h(X)$ is a polynomial over $F$, that $a+t^c\roi_{F}$, $b+t^A\roi_{F}$ are two translated fractional ideals, and that $h(a+t^c\roi_{F})\subseteq b+t^A\roi_{F}$. The unique polynomial $\psi\in K [X]$ which gives a commutative diagram
\[\begin{CD}
a+t^c\roi_{F} @>h>> b+t^A\roi_{F} \\
@V{a+t^c x\mapsto\res{x}}VV  @VV{b+t^Ax\mapsto\res{x}}V  \\
 K  @>>\psi>  K .\\
\end{CD}\]
is said to be the \emph{residue field approximation} of $h$ with respect to the translated fractional ideals.
\end{definition}

\begin{remark}
Regarding the previous definition, the translated fractional ideals will usually be clear from the context. The constant term of $\psi$ depends not only the sets $a+t^c\roi_{F}$ and $b+t^A\roi_{F}$, but on the representatives $a,b$ we choose.

When drawing the diagram above, we will henceforth omit the vertical maps, even though they do depend on the choice of $a,b$. We will also follow the habit used in the previous lemma of denoting a constant function on $ K $ by the value it assumes.
\end{remark}

Much of this paper is concerned with the problem of explicitly decomposing the preimage of a set under a polynomial and describing the resulting residue field approximations. Here is a example to illustrate the ideas:

\begin{example}
Set $q(X)=X^3+X^2+t^2$ and assume $\mbox{char}\, K \neq 2$. The aim of this example is to give explicit descriptions of the sets $\{x\in\roi_{F}:q(x)\in t^A\roi_{F}\}$ for $A=2,3$, as well as all associated residue field approximations.

Direct calculations easily show that if $x\in\roi_{F}$, then $q(x)\in t^2\roi_{F}$ if and only if $x\in t\roi_{F}$ or $x\in-1+t^2\roi_{F}$. Further, the residue field approximations are
\begin{align*}\begin{CD}
t\roi_{F} @>q>>t^2\roi_{F} \\
@VVV  @VVV  \\
 K  @>>X^2+1>  K \\
\end{CD}
&\qquad&
\begin{CD}
-1+t^2\roi_{F} @>q>>t^2\roi_{F} \\
@VVV  @VVV  \\
 K  @>>X+1>  K \\
\end{CD}\end{align*}

Similarly, if we suppose $x\in\roi_{F}$ then $q(tx)\in t^3\roi_{F}$ if and only if $x^2+1\in t\roi_{F}$; and $q(-1+t^2x)\in t^3\roi_{F}$ if and only if $x\in-1+t\roi_{F}$.

If $ K $ contains a square root of $-1$, let $i$ denote a lift of it to $\roi_{F}$; then \[\{x\in\roi_{F}:q(x)\in t^3\roi_{F}\}=it+t^2\roi_{F}\sqcup-it+t^2\roi_{F}\sqcup-1-t^2+t^3\roi_{F},\] with residue field approximations
\[\begin{array}{lcc}
\begin{CD}
it+t^2\roi_{F} @>q>>t^3\roi_{F} \\
@VVV  @VVV  \\
 K  @>>2iX+\res{(i^2+1)t^{-1}}-\res{i}>  K 
\end{CD}
&\,\,&
\begin{CD}
-it+t^2\roi_{F} @>q>>t^3\roi_{F} \\
@VVV  @VVV  \\
 K  @>>-2iX+\res{(i^2+1)t^{-1}}+\res{i}>  K 
\end{CD}\\
&&\\
\begin{CD}
-1-t^2+t^3\roi_{F} @>q>>t^3\roi_{F} \\
@VVV  @VVV  \\
 K  @>>X>  K 
\end{CD}
&&\\
\end{array}\]

If $ K $ does not contain a square root of $-1$, then $\{x\in\roi_{F}:q(x)\in t^3\roi_{F}\}=-1-t^2+t^2\roi_{F}$, with the residue field approximation given by the third diagram above.
\end{example}

We now turn to generalising the example to an arbitrary polynomial; for later applications to integration the following results will allow us to reduce calculations to the residue field, where we change variable according to the residue field approximation polynomials for example, and then return to $F$.

The first decomposition result treats the non-singular part of the polynomial, and is really just a rephrasing of Hensel's lemma:

\begin{proposition}\label{proposition_non_singular_decomposition}
Let $q(X)$ be a polynomial with coefficients in $\roi_{F}$, of degree $\ge1$ and with non-zero image in $ K [X]$; let $b\in F$. 
\begin{enumerate}
\item Suppose that $q(a)=b$ for some $a\in\roi_{F}$ and that $\res{q}'(\res{a})\neq 0$. Then for any $A\ge 1$, \[\{x\in\roi_{F}\,:\,\res{x}=\res{a}\mbox{ and }q(x)\in b+t^A\roi_{F}\}=a+t^A\roi_{F},\] and the residue field approximation is `multiplication by $\res{q}'(\res{a})$':
\[\begin{CD}
a+t^A\roi_{F} @>q>> b+t^A\roi_{F} \\
@VVV  @VVV  \\
 K  @>>\res{q}'(\res{a})X>  K \\
\end{CD}\]
\item Let $\w_1,\dots,\w_r$ be the simple (i.e. $\res{q}'(\w_i)\neq 0$) solutions in $ K $ to $\res{q}(X)=\res{b}$; let $\check{\w}_i$ be a lift by Hensel of $\w_i$ to $\roi_{F}$; that is, $q(\w_i)=b$. Then for any $A\ge 1$, \[\{x\in\roi_{F}\,:\;\res{q}'(\res{x})\neq0\mbox{ and }q(x)\in b+t^A\roi_{F}\} = \bigsqcup_{i=1}^r\check{\w}_i+t^A\roi_{F}.\]
\end{enumerate}
\end{proposition}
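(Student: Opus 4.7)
The plan is to prove this as a quantitative Hensel's lemma via Taylor expansion of $q$ about the approximate root. The substance lies in part (i); part (ii) then follows by patching (i) across the finitely many simple residue-roots.

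For part (i), since $q\in\roi_{F}[X]$, one has a polynomial identity
\[q(a+Y) = q(a) + Y\,q'(a) + Y^2 R(Y)\]
for some $R\in\roi_{F}[Y]$ depending on $a$ --- this is the quadratic Taylor remainder, which exists over any commutative ring because $q(a+Y)-q(a)-Yq'(a)$ vanishes to order at least $2$ at $Y=0$, with no need to divide by factorials. Given $x\in\roi_{F}$ with $\res{x}=\res{a}$, I would write $x=a+t^c y$ with $c\geq 1$ and either $y=0$ or $y\in\mult{\roi_{F}}$. The hypothesis $\res{q}'(\res{a})\neq 0$ forces $q'(a)\in\mult{\roi_{F}}$, so evaluating at $Y=t^c y$ expresses $q(x)-b$ as the sum of a linear term of valuation exactly $c$ and a remainder of valuation at least $2c$. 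When $c\geq 1$ the linear term dominates, so $\nu(q(x)-b)=c$; if $c<A$ this contradicts $q(x)\in b+t^A\roi_{F}$, forcing $c\geq A$, i.e.\ $x\in a+t^A\roi_{F}$. The reverse inclusion is immediate from the same identity applied in the opposite direction. Finally, setting $c=A$ and reducing mod $t$, the remainder lies in $t\roi_{F}$ (using $A\geq 1$), so $(q(a+t^A y)-b)\,t^{-A}\equiv y\,q'(a)\pmod{t\roi_{F}}$; this is exactly the assertion that the residue field approximation is multiplication by $\res{q}'(\res{a})$.

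For part (ii), any $x$ in the left-hand set reduces mod $t$ to a simple root $\res{x}$ of $\res{q}-\res{b}$ in $K$, hence $\res{x}=\w_i$ for a uniquely determined $i$; part (i) applied at $a=\check{\w}_i$ then places $x$ in $\check{\w}_i+t^A\roi_{F}$, while the reverse inclusion of each such coset also follows from (i). Disjointness is automatic since distinct $\w_i$ have distinct residues. The only real technical point throughout is the Taylor-remainder bookkeeping, which collapses to the usual Hensel inequality $2c>c$ as soon as one observes $c\geq 1$; I do not expect a substantive obstacle, since the proposition is essentially Hensel's lemma repackaged to track the induced residue field approximation.
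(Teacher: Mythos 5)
Your proof is correct and follows precisely the route the paper gestures at: the paper's own proof simply remarks that (i) "is essentially contained in the proof of Hensel's lemma," and your Taylor-expansion argument with the quadratic remainder $q(a+Y)=q(a)+Yq'(a)+Y^2R(Y)$ is exactly that Hensel-style bookkeeping, including the reduction modulo $t$ to identify the residue field approximation as multiplication by $\res{q}'(\res{a})$; part (ii) is patched from (i) in the same straightforward way.
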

\begin{proof}
(i) is essentially contained in the proof of Hensel's lemma and so we omit it. (ii) easily follows.
\end{proof}

We now consider the singular part, which is much more interesting and will be the root of future difficulties:

\begin{proposition}\label{proposition_singular_decomposition}
Let $q(X)$ be a polynomial with coefficients in $\roi_{F}$, of degree $\ge1$ and with non-zero image in $ K [X]$; let $b\in F$. For $A\ge 1$ there is a decomposition \[\{x\in\roi_{F}\,:\,\res{q}'(\res{x})= 0\mbox{ and }q(x)\in b+t^A\roi_{F}\}=\bigsqcup_{j=1}^N a_j+t^{c_j}\roi_{F}\] (assuming this set is non-empty i.e. that $\res{q}(X)-\res{b}$ has a repeated root in $K$), where $a_1,\dots,a_N\in\roi_{F}$, and $c_1,\dots,c_N\ge 1$ are positive integers.
\end{proposition}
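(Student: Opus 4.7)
The plan is to proceed by strong induction on $A$, allowing the polynomial $q$ and the constant $b$ to vary. The key observation is that the condition $q(x)\in b+t^{A}\roi_{F}$ with $A\ge 1$ forces $\res{q}(\res{x})=\res{b}$, so $\res{x}$ is confined to the (finitely many) roots of $\res{q}-\res{b}$ whenever that polynomial is non-zero; combined with the singular condition $\res{q}'(\res{x})=0$, this restricts $\res{x}$ to the common zero set of $\res{q}-\res{b}$ and $\res{q}'$. The base case $A=1$ is then immediate: the set depends only on $\res{x}$ and is the disjoint union $\bigsqcup_{i}\check{\omega}_i+t\roi_{F}$ indexed by the finitely many common $K$-roots $\omega_i$ of $\res{q}-\res{b}$ and $\res{q}'$, with $\check{\omega}_i\in\roi_{F}$ any lift.

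For $A\ge 2$, partition the set as $\bigsqcup_{i}S_i$ with $S_i=\{x\in\check{\omega}_i+t\roi_{F}:q(x)\in b+t^{A}\roi_{F}\}$. Substitute $x=\check{\omega}_i+ty$ and Taylor-expand $q(\check{\omega}_i+T)=\sum_{j}b_jT^j$ with $b_j\in\roi_{F}$. Since $b_1=q'(\check{\omega}_i)\in t\roi_{F}$, setting $T=tY$ factors out an extra power of $t$:
\[q(\check{\omega}_i+tY)=q(\check{\omega}_i)+t^{2+r_i}Q_i(Y),\]
where $r_i\ge 0$ is chosen maximal so that $Q_i\in\roi_{F}[Y]$ still has non-zero image in $K[Y]$. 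Since the top-degree coefficient $b_d=a_d$ is non-zero, $Q_i$ has degree $\deg q\ge 1$, so the hypotheses of the proposition hold for $Q_i$.

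The condition defining $S_i$ now reads $t^{2+r_i}Q_i(y)\equiv b-q(\check{\omega}_i)\pmod{t^{A}}$. If $A\le 2+r_i$, then $S_i$ is either all of $\check{\omega}_i+t\roi_{F}$ or empty, according as $b-q(\check{\omega}_i)\in t^{A}\roi_{F}$. Otherwise $S_i$ is empty unless $b-q(\check{\omega}_i)\in t^{2+r_i}\roi_{F}$; in that case, set $b_i'=(b-q(\check{\omega}_i))/t^{2+r_i}\in\roi_{F}$ and $A_i'=A-2-r_i$, which satisfies $1\le A_i'\le A-2<A$, and the condition becomes $Q_i(y)\in b_i'+t^{A_i'}\roi_{F}$. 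Partition this set into its simple and singular parts, decompose the simple part via Proposition~\ref{proposition_non_singular_decomposition}(ii), and decompose the singular part by the inductive hypothesis applied to $(Q_i,b_i',A_i')$. Finally, the substitution $y\mapsto\check{\omega}_i+ty$ preserves translated-fractional-ideal form, and the pieces assemble into the desired decomposition.

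The main obstacle is establishing the Taylor-expansion identity cleanly and verifying that $Q_i$ still satisfies the hypotheses (positive degree and non-zero residue image), since this is what makes the induction well-founded; termination is then immediate from the strict drop $A_i'\le A-2$ at each recursive call. The degenerate situation where $\res{q}-\res{b}$ vanishes identically (forcing $\res{q}'\equiv 0$) can be folded in by writing $q-b=t^{M}\hat{q}$ with $\res{\hat{q}}\ne 0$ and $M\ge 1$, then decomposing $\{x\in\roi_{F}:\hat{q}(x)\in t^{A-M}\roi_{F}\}$ using Proposition~\ref{proposition_non_singular_decomposition} together with the inductive hypothesis for the data $(\hat{q},0,A-M)$, where again the modulus strictly decreases.
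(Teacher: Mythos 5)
Your proof is correct and follows essentially the same approach as the paper's: induct on $A$, fibre the set over the residue classes $\check{\omega}_i$ that are common roots of $\res{q}-\res{b}$ and $\res{q}'$, Taylor-expand around each lift, pull out the power of $t$ (which is at least $2$ because $q'(\check{\omega}_i)\in t\roi_F$), and recurse on the new polynomial with a strictly smaller modulus, invoking Proposition~\ref{proposition_non_singular_decomposition} for the nonsingular part. The minor difference is bookkeeping: the paper picks the lift $a$ of $u_j$ so that $q(a)\in b+t^A\roi_F$ (possible whenever the fibre is non-empty), which makes $(b-q(a))t^{-\rho}+t^{A-\rho}\roi_{F}$ collapse to $t^{A-\rho}\roi_{F}$, whereas you keep a generic lift and track $b_i'=(b-q(\check{\omega}_i))/t^{2+r_i}$ explicitly; both work since the proposition already allows an arbitrary $b$. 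You also address the degenerate case $\res{q}-\res{b}\equiv 0$ explicitly, which the paper passes over silently (it would make the list $u_1,\dots,u_N$ infinite, but cannot occur for the paper's later applications since the normalised polynomial has $\res{q}(0)=0$ and $\res{q}\ne 0$); your reduction via $q-b=t^M\hat{q}$ is sound, and it is worth noting that the recursion never re-enters this degenerate case because the polynomials $Q_i$ it produces have zero constant term.
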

\begin{proof}
First suppose $A=1$. Let $a_1,\dots,a_N$ be lifts to $\roi_{F}$ of the distinct solutions in $K$ to $\res{q}(X)=\res{b}$ and $\res{q}'(X)=0$, and set $c_j=1$ for each $j$. Then the required decomposition is \[\bigsqcup_{j=1}^N a_j+t^{c_j}\roi_{F}.\] We now determine the residue field approximation of $q$ on each $a_j+t^{c_j}\roi_{F}$ as it will be used later in corollary \ref{corollary_R=-1}. So, for each $j$, consider the Taylor expansion \[q(a_j+tX)=q(a_j)+q'(a_j)tX+q_2(a_j)t^2X^2+\dots+q_d(a_j)t^dX^d\] where $d=\mbox{deg}\,q$. But $q'(a_j)\in t\roi_{F}$ implies $q(a_j+tx)\in q(a_j)+t^2\roi_{F}$ for all $x$ in $\roi_{F}$, which is to say that
\[\begin{CD}
a_j+t^{c_j}\roi_{F} @>q>> b+t\roi_{F} \\
@VVV  @VVV  \\
 K  @>>\res{(q(a_j)-b)t^{-1}}>  K \\
\end{CD}\]
commutes, where the lower horizontal map is constant i.e. each residue field approximation associated to the decomposition is constant.

We now suppose $A>1$ and proceed by induction on $A$. Let $u_1,\dots,u_N\in K $ be the distinct solutions to $\res{q}(X)=\res{b}$ and $\res{q}'(X)=0$, and write \[W_j=\{x\in\roi_{F}\,:\,\res{x}=u_j\mbox{ and }q(x)\in b+t^A\roi_{F}\}\] for $j=1,\dots,N$. Since \[\{x\in\roi_{F}\,:\,\res{q}'(\res{x})= 0\mbox{ and }q(x)\in b+t^A\roi_{F}\}=\bigsqcup_{j=1}^N W_j,\] it is enough to decompose each $W_j$ in the required manner, so we now fix a value of $j$, writing $W=W_j$ and $u=u_j$. 

If $W$ is empty then we are done; else $u$ has a lift to $a\in\roi_{F}$ such that $q(a)\in b+t^A\roi_{F}$, and we now fix such an $a$. Using the same Taylor expansion as above, there exist $\rho\ge1$ and $Q\in \roi_{F}[X]$ such that $q(a+tX)=q(a)+t^{\rho}Q(X)$ and $\res{Q}(X)\neq0$; in fact, $q'(a)\in t\roi_{F}$ implies $\rho\ge2$, though we will not use this. Therefore \[W=a+t\{x\in\roi_{F}\,:\,Q(x)\in(b-q(a))t^{-\rho}+t^{A-\rho}\roi_{F}\},\] but also note that \[(b-q(a))t^{-\rho}+t^{A-\rho}\roi_{F}=t^{A-\rho}((b-q(a))t^{-A}+\roi_{F})=t^{A-\rho}\roi_{F}\] by choice of $a$. Therefore $W=a+t\{x\in\roi_{F}\,:\,Q(x)\in t^{A-\rho}\roi_{F}\}$, and it becomes clear how the induction should proceed.

In fact, we must consider three cases, depending on the relative magnitudes of $\rho$ and $A$:

\begin{enumerate}
\item $A-\rho<0$. Then $\{x\in\roi_{F}\,:\,Q(x)\in t^{A-\rho}\roi_{F}\}=\roi_{F}$ and $Q(\roi_{F})\subseteq\roi_{F}\subset t^{A-\rho}\roi_{F}$; therefore the residue field approximation is constant, given by the diagram
\[\begin{CD}
\roi_{F} @>Q>> t^{A-\rho}\roi_{F} \\
@VVV  @VVV  \\
 K  @>>0>  K \\
\end{CD}\]
This implies $W=a+t\roi_{F}$ with a constant residue field approximation:
\[\begin{CD}
a+t\roi_{F} @>q>>b+t^A\roi_{F} \\
@VVV  @VVV  \\
 K  @>>\res{(q(a)-b)t^{-A}}>  K .\\
\end{CD}\]

\item $A-\rho=0$. Again, $\{x\in\roi_{F}\,:\,Q(x)\in t^{A-\rho}\roi_{F}\}=\roi_{F}$; the residue field approximation is clearly
\[\begin{CD}
\roi_{F} @>Q>>\roi_{F} \\
@VVV  @VVV  \\
 K  @>>\res{Q}>  K \\
\end{CD}\]
Therefore $W=a+t\roi_{F}$, with residue field approximation
\[\begin{CD}
a+t\roi_{F} @>q>>b+t^A\roi_{F} \\
@VVV  @VVV  \\
 K  @>>\res{Q}(X)+\res{(q(a)-b)t^{-A}}>  K .\\
\end{CD}\]

\item $A-\rho>0$. Here we may use the inductive hypothesis and proposition \ref{proposition_non_singular_decomposition} to write \[\{x\in\roi_{F}\,:\,Q(x)\in t^{A-\rho}\roi_{F}\}=\bigsqcup_i d_i+t^{e_i}\roi_{F},\] with residue field approximations $\psi_i(X)$, say:
\[\begin{CD}
d_i+t^{e_i}\roi_{F} @>Q>>t^{A-\rho}\roi_{F} \\
@VVV  @VVV  \\
 K  @>>\psi_i>  K \\
\end{CD}\]
Therefore $W=\bigsqcup_i a+d_it+t^{e_i+1}\roi_{F}$, with residue field approximations
\[\begin{CD}
a+d_it+t^{e_i+1}\roi_{F} @>q>>b+t^A\roi_{F} \\
@VVV  @VVV  \\
 K  @>>\psi_i(X)+\res{(q(a)-b)t^{-A}}>  K .\\
\end{CD}\]
\end{enumerate}
\end{proof}

For $q(X)$ as in the previous two propositions, these two decomposition results completely describe $\{x\in\roi_{F}:q(x)\in b+t^A\roi_{F}\}$ in terms of $\le(\deg q)^A$ translated fractional ideals equipped with polynomial residue field approximations. Moreover, the proof of the second result gives some insight into how structure of the polynomial $q$ effects the resulting residue field approximations. For applications beyond those described in this paper, it will be necessary to better understand how the decomposition varies with $b$ and $A$. For small $A$ we have the following result:

\begin{lemma}\label{lemma_depth_1_and_2}
Let $q(X)$ be a polynomial with coefficients in $\roi_{F}$, of degree $\ge1$ and such that $q'$ has non-zero image in $ K [X]$; let $A=1$ or $2$. There are finitely many $b_1,\dots,b_m\in\roi_{F}$ such that if $b\in\roi_{F}$ and $\{x\in\roi_{F}:q(x)\in b+t^A\roi_{F},\;\res{q}'(\res{x})=0\}$ is non-empty, then $b\equiv b_i\mod t^A\roi_{F}$ for some $i\in\{1,\dots,m\}$.
\end{lemma}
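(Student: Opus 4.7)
The plan is to Taylor expand $q$ around lifts of the finitely many critical points of $\res{q}$, exploiting that for $A\le 2$ only the zeroth-order term of the expansion can contribute a new residue class modulo $t^A$.

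Since $\res{q}'$ is a non-zero polynomial in $K[X]$, it has only finitely many roots $u_1,\dots,u_N\in K$. For each $i$ I fix a lift $\check{u}_i\in\roi_{F}$ and set $b_i:=q(\check{u}_i)$; these are the candidate representatives.

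Now suppose $b\in\roi_{F}$ is such that the set in the statement is non-empty, and pick some $x\in\roi_{F}$ with $\res{q}'(\res{x})=0$ and $q(x)\in b+t^A\roi_{F}$. Then $\res{x}=u_i$ for some $i$, so $x=\check{u}_i+ty$ for some $y\in\roi_{F}$. Taylor expanding gives
\[q(\check{u}_i+tX)=q(\check{u}_i)+q'(\check{u}_i)tX+t^2R(X),\]
for some $R\in\roi_{F}[X]$ (using Hasse derivatives if one is nervous about characteristic issues, to guarantee integral coefficients). The key point is that $\res{q}'(u_i)=0$ forces $q'(\check{u}_i)\in t\roi_{F}$, so $q'(\check{u}_i)ty\in t^2\roi_{F}$. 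Hence $q(x)-q(\check{u}_i)\in t^2\roi_{F}\subseteq t^A\roi_{F}$, and therefore $b\equiv q(x)\equiv b_i\pmod{t^A\roi_{F}}$, as required.

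There is no real obstacle here; the actual content of the lemma is the restriction $A\le 2$. Already for $A=3$ the quadratic term $q_2(\check{u}_i)t^2y^2$ survives modulo $t^3$ and genuinely depends on $y$, so no finite family of residues modulo $t^3$ can control $b$. This failure is precisely the singular behaviour that causes the difficulties addressed in the rest of the paper.
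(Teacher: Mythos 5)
Your proof is correct and takes essentially the same approach as the paper: reduce to the finitely many roots of $\res{q}'$, lift them, and use the Taylor expansion together with $q'(\check{u}_i)\in t\roi_F$ to show the linear term dies modulo $t^2$. The only cosmetic difference is that you treat $A=1$ and $A=2$ uniformly, whereas the paper handles $A=1$ first and then adds the Taylor observation for $A=2$.
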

\begin{proof}
First suppose $A=1$. Then $\{x\in\roi_{F}:q(x)\in b+t^A\roi_{F},\res{q}'(\res{x})=0\}$ being non-empty implies that $\res{b}$ is the image under $\res{q}$ of one of the finitely many roots of $\res{q}'$.

Now suppose that $A=2$. Then the argument is just the same as for $A=1$, except it is important to observe the following: if $a_1,a_2\in\roi_{F}$ are equal modulo $t\roi_{F}$, and $\res{q}'(\res{a}_i)=0$ for $i=1,2$, then $q(a_1)=q(a_2)\mod t^2\roi_{F}$. This follows from the Taylor expansion and the fact that $q'(a_i)\in t\roi_{F}$.
\end{proof}

Decomposition results similar to the previous ones are common in model theory; for example, in the theory of algebraically closed valued fields \cite{Robinson1977}, every definable subset of the field is a finite disjoint union of points and `Swiss cheeses'. Further, these decompositions are related to ramification theory and rigid geometry through A.~Abbes and T.~Saito's \cite{Abbes2002} \cite{Abbes2003} ramification theory for complete discrete valuation fields with imperfect residue field, about which we will now say some more.

Until Abbes and Saito' work it was a significant open problem to systematically generalise the classical ramification theory for complete discrete valuation fields with perfect residue field to the imperfect residue field situation; some alternative approaches are due to J.~Borger \cite{Borger2004} \cite{Borger2004a}, K.~Kato \cite{Kato1989} \cite{Kato1994}, and I.~Zhukov \cite{Zhukov2000} \cite{Zhukov2003}. Geometrically, the importance of this lies in the following situation. If $\phi:S_1\to S_2$ is a finite morphism between smooth, projective surfaces, over a field $k$ which is allowed to be perfect, then according to Zariski's `purity of the branch locus', the ramification of $\phi$ occurs along curves. Let $B\subset S_1$ be an irreducible curve with generic point $y$, and set \[K(S_1)_y=\mbox{Frac}\widehat{\roi_{S_1,y}}\,;\] this is a complete discrete valuation field whose residue field is $k(B)$. Moreover, we have a finite extension \[K(S_1)_y/K(S_2)_{\phi(y)},\] whose ramification properties reflect the local ramification of $\phi$ along $B$. But $k(B)$ will be imperfect and $K(S_1)_y/K(S_2)_{\phi(y)}$ may have an inseparable residue field extension.

We now give a summary of the basics of Abbes and Saito's theory. There is a more extensive overview by L.~Xiao \cite{Xiao2007}. Let $L/M$ be a finite, Galois extension of complete discrete valuation fields with arbitrary residue fields. Then $\roi_L$ is a complete intersection algebra over $\roi_M$ (since they are both regular local rings) and we may therefore write \[\roi_L=\roi_M[T_1,\dots,T_n]/\langle f_1,\dots,f_n\rangle\] for a regular sequence $f_1,\dots,f_n$.

Now, for any real $a\ge 1$, one introduces the rigid space \[X_{L/M}^a=\{\ul{x}\in (M^\sub{alg})^n:\nu_M(x_i)\ge0\mbox{ all }i,\,\nu_M(f_i(\ul{x}))\ge0\mbox{ all }i\},\] where $\nu_M:M^\sub{alg}\to\bb{Q}\cup\{\infty\}$ is the extension of the discrete valuation on $M$. Let $\pi_0(X_{L/M}^a)$ denote the set of connected components (in the sense of rigid geometry) of this space; if $n=1$, which one may assume if $\res{L}/\res{M}$ is separable, then it follows from our decomposition results \ref{proposition_non_singular_decomposition} and \ref{proposition_singular_decomposition} that these components are balls. As $a\to\infty$, $X_{L/M}^a$ will consist of $|L:M|$ small balls; conversely, $X_{L/M}^0$ is a single large ball. A central idea of Abbes and Saito's theory is to analyse the behaviour of $X_{L/M}^a$ as $a$ varies; in particular, when it breaks into $|L/M|$ balls. This will soon be made precise.

The natural action of the absolute Galois group $\mbox{Gal}(M^\sub{alg}/M)$ on $X_{L/M}^a$ induces an action on $\pi_0(X_{L/M}^a)$, which then factors transitively through $G=\mbox{Gal}(L/M)$.

\begin{remark}\label{remark_ramification_theory_in_perfect_case}
To motivate what follows, let us briefly suppose that $M$ has perfect residue field. Let $\eta_{L/M}:[-1,\infty]\to[-1,\infty]$ be the inverse of the Hasse-Herbrand function $\psi_{L/M}$, relating the upper and lower ramification filtrations by $G^a=G_{\psi_{L/M}(a)}$ (see \cite{Fesenko2002} or \cite{Neukirch1999}). Then it is not hard to prove: \begin{quote}{\em For $a\ge -1$, $\sigma\in G$ acts trivially on $\pi_0(X_{L/M}^{\eta_{L/M}(a)+1})$ if and only if $\sigma\in G_a$.}\end{quote} (A nice sketch is given in \cite{Xiao2007}). So, for any $a\ge -1$, the kernel of the action of $G$ on $\pi_0(X_{L/M}^{a+1})$ is $G^a$.
\end{remark}

Abbes and Saito take this final observation in this remark as the definition of the upper filtration in their theory:

\begin{definition}
Let $L/M$ be a finite, Galois extension of complete discrete valuation fields. The {\em upper ramification filtration} on $G=\mbox{Gal}(L/M)$, is defined, for $a\ge -1$, by \[G^a=\ker(G\to\mbox{Aut}(\pi_0(X_{L/M}^{a+1}))).\]
\end{definition}

Starting from this definition of the upper ramification filtration, Abbes and Saito develop fully a ramification theory for $M$. Xiao has augmented their work by establishing the Hasse-Arf integrality theorem for certain conductors \cite{Xiao2008a} \cite{Xiao2008b}.

Now suppose that $M=F$ is our two-dimensional local field. As discussed above, a problem which will appear later is the variation of the sets $\{x\in\roi_{F}:q(x)\in b+t^A\roi_{F}\}$ as $b$ varies across $\roi_F$. This is related, albeit mysteriously, with understanding the ramification of extensions of the form $F(\al)/F$, where $\al$ is a root of $q(X)-b$. In fact, we will soon assume that $q(0)=0$, and an alternative viewpoint is that both the integration and ramification theories study the monodromy action of $(x,y)\mapsto y-q(x)$ around zero. Unfortunately, there are currently no precise comparison results between the integration and ramification theories.


\section{Non-linear changes of variables}

In this section we investigate the behaviour of Fubini functions on $F\times F$ under certain non-linear changes of variables. More precisely, we consider the following:

\begin{conjecture}\label{conjecture}
Let $a_1,a_2\in F$, $n_1,n_2\in\bb{Z}$, and let $h(X)$ be a polynomial over $F$. Then for any Schwartz-Bruhat function $f$ on $K\times K$, letting $g=f^{(a_1,a_2),(n_1,n_2)}$ be the lift of $f$  at $(a_1,a_2),(n_1,n_2)$, the function \[\Phi(x,y)=g(x,y-h(x))\] is Fubini on $F\times F$, with repeated integral equal to that of $f$.
\end{conjecture}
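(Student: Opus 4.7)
The plan is to prove equality of the two iterated integrals by showing both compute $\int_K\int_K f\,du\,dv\cdot X^{n_1+n_2}$. One direction is essentially free: for each fixed $x$, translation invariance of $\int^F$ (Theorem \ref{theorem_main_properties_of_integral}(ii)) gives
\[\int^F g(x,y-h(x))\,dy=\int^F g(x,y)\,dy,\]
so Proposition \ref{proposition_properties_of_fubini_functions} and Remark \ref{remark_repeated_integral_of_lifted_function} applied to $g=f^{(a_1,a_2),(n_1,n_2)}$ yield the stated value. The whole content of Conjecture \ref{conjecture} therefore lies in the opposite order of integration.

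I would next use the product change of variables $x=a_1+t^{n_1}u$, $y=a_2+t^{n_2}v$, justified by Proposition \ref{proposition_properties_of_fubini_functions}(i), to reduce to the special case $a_1=a_2=0$, $n_1=n_2=0$, $g=f^0$, with $h$ replaced by the polynomial $\tilde h(u)=t^{-n_2}h(a_1+t^{n_1}u)\in F[X]$. It then suffices to prove, for every Schwartz--Bruhat $f$ on $K\times K$ and every $\tilde h\in F[X]$, that
\[\int^F\int^F f^0(x,y-\tilde h(x))\,dx\,dy=\int_K\int_K f(u,v)\,du\,dv.\]

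For the inner $x$-integral with $y$ fixed, the integrand vanishes off $S_y=\{x\in\roi_F:\tilde h(x)\in y+\roi_F\}$. The central idea is to put $\tilde h$ into the normal form $\tilde h(X)=\tilde h(0)+t^Rq(X)$ with $q\in\roi_F[X]\setminus t\roi_F[X]$, and then to decompose $S_y$ by iterating Propositions \ref{proposition_non_singular_decomposition} and \ref{proposition_singular_decomposition} as a finite disjoint union of translated fractional ideals $a_j(y)+t^{c_j(y)}\roi_F$ carrying residue field approximations $\psi_j(y;\cdot)\in K[X]$ for $\tilde h(\cdot)-y$. On the $j$-th piece the integrand, as a function of $x$, is a lift in the sense of Remark \ref{remark_integral_of_lifted_function} of the function $\tilde s\mapsto f(\res{a_j(y)},-\psi_j(y;\tilde s))$, so Theorem \ref{theorem_main_properties_of_integral} delivers the explicit formula
\[\int^F f^0(x,y-\tilde h(x))\,dx=\sum_j\left(\int_K f(\res{a_j(y)},-\psi_j(y;\tilde s))\,d\tilde s\right)X^{c_j(y)}.\]

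The main obstacle is to integrate this expression in $y$ and identify the result with $\int_K\int_K f$. The decomposition data $(a_j(y),c_j(y),\psi_j(y;\cdot))$ varies with $y$ through the configuration of the roots of $\tilde h-y$, so I would stratify $\roi_F$ into finitely many regions on which the combinatorial data is constant, using Lemma \ref{lemma_depth_1_and_2} and higher-precision analogues. On each stratum, pushing the inner sum forward along the $\psi_j$ should reduce it to a genuine Haar integral on $K\times K$, after which Fubini on $K\times K$ for Schwartz--Bruhat $f$ recovers $\int_K\int_K f$. The strategy succeeds cleanly when $\tilde h$ is linear or lies in $\roi_F[X]$ (a single trivial piece appears) and becomes progressively more delicate as the parameter $R$ grows more negative. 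The genuine obstruction, already flagged in the introduction, is that in positive residue characteristic a residue approximation $\res{\psi_j}$ may be purely inseparable; the push-forward along $\psi_j$ then collapses information, and the function $y\mapsto\int^F f^0(x,y-\tilde h(x))\,dx$ can fail to lie in $\cal{L}(F)$, so Conjecture \ref{conjecture} cannot be expected to hold in full generality as stated.
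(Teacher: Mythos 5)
Your overall strategy matches the paper's: reduce via Lemma~\ref{lemma_reduction} to the normalised situation $a_1=a_2=n_1=n_2=0$, $h(0)=0$, $h=t^Rq$, decompose $\{x\in\roi_F:q(x)\in y_0+t^{-R}\roi_F\}$ by Propositions~\ref{proposition_non_singular_decomposition} and~\ref{proposition_singular_decomposition} into translated fractional ideals with residue field approximations, and evaluate the inner $x$-integral as a sum of lifted integrals over $K$. Your explicit formula for $\int^F f^0(x,y-\tilde h(x))\,dx$ is essentially Proposition~\ref{proposition_integral_of_singular_sections} together with the analogous non-singular computation, and your observation that the ``easy'' order of integration is handled by translation invariance is exactly how the paper disposes of the $dy\,dx$ integral in Lemma~\ref{lemma_reduction}.

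However, your concluding paragraph conflates two distinct obstructions that the paper is at pains to separate, and in doing so attributes the wrong mechanism to the purely inseparable case. First, the purely inseparable condition is on the normalised polynomial $\res{q}\in K[X]$ (equivalently, $\res{q}'\equiv 0$), not on the residue field approximations $\psi_j$; the $\psi_j$ live on the residue field and writing $\res{\psi_j}$ does not parse. Second, and more importantly, the paper's two failure modes are almost opposite to what you describe: when $R<-1$ and $\res{q}$ is \emph{not} purely inseparable, the function $y\mapsto\int^F\Phi_{\sub{sing}}(x,y)\,dx$ may fail to lie in $\cal{L}(F)$ (Remark~\ref{remark_nonsense}, Example~\ref{example_nonsense}), and the paper argues this is a repairable defect of the integration theory rather than a genuine counterexample; whereas when $\res{q}$ is purely inseparable and $R=-1$ (Section~\ref{section_purely_insep}), \emph{both} repeated integrals exist perfectly well in $\cal{L}(F)$ --- one evaluates to $\int_K\int_K f$, the other to $0$ --- so the conjecture is decisively false with no non-integrability involved. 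You should disentangle these: non-integrability is the issue in the separable $R<-1$ regime, while honest inequality of convergent repeated integrals is the phenomenon in the inseparable regime, and only the latter is presented as an irremediable counterexample.
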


The conjecture is false in the generality in which we have stated it, though an important special case has already been done treated in a previous paper of the author:

\begin{theorem}\label{theorem_linear_case}
With notation as in the conjecture, if $\deg h\le 1$ then the conjecture is true.
\end{theorem}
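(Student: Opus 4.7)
The plan is to treat $\deg h = 0$ and $\deg h = 1$ separately, reducing the latter to a normalised form via Proposition~\ref{proposition_properties_of_fubini_functions}(i) from which both repeated integrals can be computed directly. If $h$ is a constant $\beta$, then $\Phi(x,y) = g(x, y-\beta)$ is a translate of $g$ in the second coordinate, so both repeated integrals coincide with those of $g$ by translation invariance (Theorem~\ref{theorem_main_properties_of_integral}(ii)) and equal $X^{n_1+n_2}\int_K\int_K f(u,v)\,du\,dv$ by Remark~\ref{remark_repeated_integral_of_lifted_function}. For $h(x) = \alpha x + \beta$ with $\alpha \in \mult{F}$, the substitution $(x,y) = (a_1 + t^{n_1}\xi,\, a_2 + t^{n_2}\eta)$ together with Proposition~\ref{proposition_properties_of_fubini_functions}(i), followed by a translation in $\eta$ to absorb the resulting constant, reduces the problem to showing: for any $\alpha \in F$ and Schwartz-Bruhat $f$ on $K\times K$, the function $\Psi(\xi,\eta) := f^0(\xi, \eta - \alpha\xi)$ is Fubini with repeated integral $\int_K\int_K f(u,v)\,du\,dv$.

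The order $\int^F\int^F\Psi\,d\eta\,d\xi$ is immediate: for fixed $\xi$, translation invariance in $\eta$ yields $\int^F f^0(\xi, \eta - \alpha\xi)\,d\eta = \int^F f^0(\xi, \eta)\,d\eta$, and the outer integral is then evaluated via Remark~\ref{remark_repeated_integral_of_lifted_function}. The reverse order is the heart of the proof. Write $\alpha = t^v u$ with $u \in \mult{\roi_{F}}$; for fixed $\eta \in F$, the support of $\xi \mapsto \Psi(\xi,\eta)$ is $\roi_{F} \cap (\alpha^{-1}\eta + \alpha^{-1}\roi_{F})$. Separating the cases $v \ge 0$ and $v < 0$, one checks this intersection is empty unless $\eta$ lies in an explicit translated fractional ideal of $F$, and is otherwise a single translated fractional ideal on which an explicit parametrisation identifies $\xi \mapsto \Psi(\xi,\eta)$ with the lift (in the sense of Remark~\ref{remark_integral_of_lifted_function}) of a Schwartz-Bruhat section of $f$. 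Evaluating its integral via Remark~\ref{remark_integral_of_lifted_function}, the result viewed as a function of $\eta$ is itself the lift of a Schwartz-Bruhat function on $K$ obtained from $f$ by partially integrating out one coordinate; a second application of Remark~\ref{remark_integral_of_lifted_function} combined with Fubini on $K\times K$ then produces the desired value.

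The main obstacle is the bookkeeping in the reverse direction: the factor $X^{-v}$ from the inner integral must cancel the factor $X^v$ produced by integrating over the supporting fractional ideal of $\eta$, and the factors $\abs{\res{u}}_K^{\pm 1}$ from the two residue-field changes of variable must likewise cancel. Both cancellations are expected, since the map $(\xi,\eta)\mapsto(\xi,\eta-\alpha\xi)$ has unipotent Jacobian; but they must be verified directly from the definitions before the computation collapses to $\int_K\int_K f(u,v)\,du\,dv$.
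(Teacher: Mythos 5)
Your argument is correct, but you take a genuinely different route from the paper. The paper's proof of Theorem~\ref{theorem_linear_case} is essentially a citation: it observes that $(x,y)\mapsto(x,y-h(x))$ with $\deg h=1$ is a $GL_2(F)$ change of variables, and invokes the main result of \cite{Morrow2008a} (specifically lemma~3.7 there), which establishes that $(x,y)\mapsto g(\tau(x,y))$ is Fubini for every $\tau\in GL_2(F)$. The degree-zero case is likewise dispatched in one line by translation invariance. You instead give a self-contained computation of both repeated integrals for the normalised shear $\Psi(\xi,\eta)=f^0(\xi,\eta-\alpha\xi)$: the $d\eta\,d\xi$ order is immediate by translation invariance, and the $d\xi\,d\eta$ order is handled by writing $\alpha=t^v u$, identifying the $\xi$-support $\roi_F\cap(\alpha^{-1}\eta+\alpha^{-1}\roi_F)$ as a single translated fractional ideal (or empty) depending on the coset of $\eta$, and tracking the $X^{\pm v}$ and $\abs{\res{u}}_K^{\pm1}$ factors through two applications of Remark~\ref{remark_integral_of_lifted_function}. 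Your cancellations check out: in the case $v<0$, parametrising $\xi=t^{-v}u^{-1}\eta+t^{-v}\xi_0$ collapses the inner integrand to a lift at depth $-v$, contributing $X^{-v}\abs{\res u}_K^{-1}$, while the outer integral over $\eta\in t^v\roi_F$ contributes $X^v$ and the change of variable $\res u^{-1}\res\eta_0$ contributes $\abs{\res u}_K$. What your approach buys is independence from \cite{Morrow2008a} and an explicit illustration of the mechanism that later sections generalise to polynomial $q$ (compare the structure of Propositions~\ref{proposition_non_singular_decomposition} and~\ref{proposition_integral_of_singular_sections}); what the paper's approach buys is brevity and the full $GL_2$ statement for free. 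One small caution: the marginal $v'\mapsto\int_K f(\res u^{-1}v',w)\,dw$ you produce after the inner integration need only be Haar integrable, not Schwartz--Bruhat, for the outer integral to make sense in $\cal L(F)$; this is automatic here since $f$ is Schwartz--Bruhat, but the phrasing ``lift of a Schwartz--Bruhat function'' is slightly stronger than what the machinery requires or what you strictly need to justify.
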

\begin{proof}
The main result of \cite{Morrow2008a}, for a two-dimensional space, is that for any $\tau\in GL_2(F)$, the function $(x,y)\mapsto g(\tau(x,y))$ is Fubini on $F\times F$. The conjecture is a special case of that result when $\deg h=1$; in fact, it essentially follows from lemma 3.7 of \cite{Morrow2008a}

If $\deg h=0$ then the conjecture follows from translation invariance of the integral; see proposition \ref{proposition_properties_of_fubini_functions} and remark \ref{remark_repeated_integral_of_lifted_function}.
\end{proof}

Because of the previous theorem, we will have in mind polynomials $h(X)$ of degree at least $2$, though our results are equally valid for lower degree. We will be interested in conditions on the data $a_1,a_2,n_1,n_2,h$ such that the conjecture is true for all Schwartz-Bruhat functions $f$. We assign to the data two invariants as follows:

\begin{definition}
Let $a_1,a_2,n_1,n_2,h$ be data for the conjecture, and write $h(a_1+t^{n_1}X)=h(a_1)+t^Rq(X)$, where $R\in\bb{Z}$, $q\in\roi_{F}[X]$, and the image of $q$ in $ K [X]$ is non-zero. Note that $q(0)=0$.

The \emph{depth} and \emph{normalised polynomial} associated to the data are defined to be $R-n_2$ and $q(X)$ respectively.
\end{definition}

A summary of what we know about the validity of the conjecture, classified by the depth and normalised polynomial, may be found in section \ref{section_summary}. The sense in which the depth and normalised polynomial are invariants, and why they are useful, is given by the following lemma in which we reduce the conjecture to a special case:

\begin{lemma}\label{lemma_reduction}
Fix a polynomial $q\in\roi_{F}[X]$ with nonzero image in $ K [X]$ and no constant term, and an integer $R\in\bb{Z}$. Then the following are equivalent:
\begin{enumerate}
\item the conjecture is true for all data $a_1,a_2,n_1,n_2,h$ with depth $R$ and normalised polynomial $q$;
\item the conjecture is true for all data of the form $0,0,0,0,h$ with depth $R$, normalised polynomial $q$, and such that $h(0)=0$;
\item for all Schwartz-Bruhat functions $f$ on $K\times K$, the function \[(x,y)\mapsto f^0(x,y-t^Rq(x))\] is Fubini;
\item for all Schwartz-Bruhat functions $f$ on $K\times K$, the following hold: for each $y\in F$, the function $x\mapsto f^0(x,y-t^Rq(x))$ is integrable, then that $y\mapsto\int^Ff^0(x,y-t^Rq(x))\,dx$ is integrable, and finally that \[\int^F\int^F f^0(x,y-t^Rq(x))\,dxdy=\int_K\int_K f(u,v)\,dudv.\]
\end{enumerate}
\end{lemma}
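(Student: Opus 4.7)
The plan is to prove the chain (i)$\Rightarrow$(ii)$\Leftrightarrow$(iii)$\Leftrightarrow$(iv)$\Rightarrow$(i), of which only the final implication and a short direct computation behind (iii)$\Leftrightarrow$(iv) carry real content; everything else is bookkeeping.

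The implication (i)$\Rightarrow$(ii) is immediate specialization, and (ii)$\Leftrightarrow$(iii) is unpacking definitions: for data of the form $0,0,0,0,h$ with $h(0)=0$, depth $R$, and normalised polynomial $q$, the defining expansion $h(0+t^0X) = h(0) + t^R q(X)$ forces $h(X) = t^R q(X)$, while $g = f^{(0,0),(0,0)} = f^0$, so $\Phi(x,y) = f^0(x, y - t^R q(x))$; thus the Fubini assertion of the conjecture for this data is exactly (iii) together with the integral value in (iv).

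The equivalence (iii)$\Leftrightarrow$(iv) rests on the observation that one of the iterated integrals of $\Psi(x,y) := f^0(x, y - t^R q(x))$ is automatic. For fixed $x$, translation invariance (Theorem \ref{theorem_main_properties_of_integral}(ii)) gives $\int^F \Psi(x,y)\,dy = \int^F f^0(x,y)\,dy$, which vanishes for $x \notin \roi_F$ and otherwise equals $\int_K f(\res{x},v)\,dv$. Hence $x \mapsto \int^F \Psi(x,y)\,dy$ is the lift $G^0$ of the Schwartz--Bruhat function $G(u) = \int_K f(u,v)\,dv$, and Theorem \ref{theorem_main_properties_of_integral}(i) yields integral $\int_K\int_K f(u,v)\,dv\,du$. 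So the $dy\,dx$ iterated integral is always this number, and Fubini-ness of $\Psi$ is equivalent to the remaining conditions on the opposite iterated integral---i.e.\ exactly the content of (iv).

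For (iii)$\Rightarrow$(i), given general data $a_1,a_2,n_1,n_2,h$ of depth $R$ and normalised polynomial $q$, the defining expansion reads $h(a_1 + t^{n_1}u) = h(a_1) + t^{R+n_2} q(u)$ (note $R$ in the lemma denotes the depth, so the Taylor-type exponent is $R+n_2$). Substituting $x = a_1 + t^{n_1}u$ and $y = a_2 + h(a_1) + t^{n_2}v$ into $\Phi(x,y) = g(x, y - h(x))$ and unpacking $g = f^{(a_1,a_2),(n_1,n_2)}$, the second argument of $g$ collapses to $a_2 + t^{n_2}(v - t^R q(u))$, giving
\[\Phi(a_1 + t^{n_1}u,\, a_2 + h(a_1) + t^{n_2} v) = f^0(u,\, v - t^R q(u)).\]
Proposition \ref{proposition_properties_of_fubini_functions}(i), applied via the inverse affine change of variables, then transfers Fubini-ness of the right-hand side (given by (iii)) to $\Phi$, rescaling the integral by $|t^{n_1}|^{-1}|t^{n_2}|^{-1} = X^{n_1+n_2}$; by Remark \ref{remark_repeated_integral_of_lifted_function} this is precisely the repeated integral of $g$ required by the conjecture. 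The only obstacle is clerical---keeping straight the two possible uses of the letter $R$ (Taylor exponent versus depth, differing by $n_2$)---after which the lemma reduces entirely to the affine-change-of-variables machinery of Sections 2--3.
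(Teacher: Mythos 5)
Your proof is correct and follows essentially the same route as the paper: reduce (i) to the normalized case by the affine substitution $x = a_1 + t^{n_1}u$, $y = a_2 + h(a_1) + t^{n_2}v$ and invoke Proposition~\ref{proposition_properties_of_fubini_functions}(i), and observe that the $dy\,dx$ iterated integral of $\Psi(x,y)=f^0(x,y-t^Rq(x))$ is automatic by translation invariance, so Fubini-ness collapses to condition (iv). You also correctly flag the slight notational clash (the letter $R$ in the lemma is the depth, so the Taylor exponent is $R+n_2$), which the paper resolves silently.

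One small remark on exposition: you write that (ii) amounts to ``(iii) together with the integral value in (iv),'' which is accurate, but your announced chain claims (ii)$\Leftrightarrow$(iii) directly. Strictly, (ii) requires not just Fubini-ness but also the specific repeated-integral value from the conjecture; this extra content is only reabsorbed because of the (iii)$\Leftrightarrow$(iv) step. The paper's proof is equally terse on this point, so it is not a gap, but for clarity you should either state the chain as (i)$\Rightarrow$(ii)$\Rightarrow$(iii), establish (iii)$\Leftrightarrow$(iv), and then close with (iii)$+$(iv)$\Rightarrow$(ii) and (iii)$\Rightarrow$(i), or note explicitly that (iii)$\Rightarrow$(iv) supplies the needed integral value so that (iii)$\Rightarrow$(ii).
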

\begin{proof}
Clearly (i)$\Rightarrow$(ii). The only data satisfying the conditions of (ii) are $0,0,0,0,t^Rq$, and so (ii)$\Leftrightarrow$(iii).

(iii)$\Rightarrow$(i): So assume (iii), letting $a_1,a_2,n_1,n_2,h$ be data for the conjecture with depth $R$ and normalised polynomial $q$. Let $f$ be Schwartz-Bruhat on $K\times K$ and write $g=f^{(a_1,a_2),(n_1,n_2)}$. Note that $h(a_1+t^{n_1}X)=h(a_1)+t^{R+n_2}q(X)$, and that therefore for all $x,y\in F$,
\begin{align*}
g(a_1+t^{n_2}x,a_2+t^{n_2}y-h(a_1+t^{n_1}x))
	&=f^0(x,y-t^{-n_2}h(a_1+t^{n_1}x))\\
	&=f^0(x,(y-t^{-n_2}h(a_1))-t^Rq(x)).
\end{align*}

By (iii), this final function of $(x,y)$ differs from a Fubini function by translation. So $(x,y)\mapsto g(x,y-h(x))$ differs from a Fubini function only by translation and scaling, and hence is itself Fubini, by proposition \ref{proposition_properties_of_fubini_functions}. Therefore we have proved (i).

(iii)$\Leftrightarrow$(iv): First note that for any $x\in F$, the function $y\mapsto f^0(x,y-t^Rq(x))$ is just the translation of $y\mapsto f^0(x,y)$ by $t^Rq(x)$; since $f^0$ is Fubini this is integrable, and translation invariance of the integral implies \[\int^F f^0(x,y-t^Rq(x))\,dy=\int^F f^0(x,y)\,dy.\] But as a function of $x$ this is integrable, again since $f^0$ is Fubini, and \[\int^F\int^F f^0(x,y-t^Rq(x))\,dydx=\int^F\int^F f^0(x,y)\,dydx.\] Now by remark \ref{remark_repeated_integral_of_lifted_function} and Fubini's theorem for $K\times K$, \[\int^F\int^F f^0(x,y)\,dydx=\int_K\int_K f(u,v)\,dudv.\]

By the definition of a Fubini function, it now follows that $(x,y)\mapsto f^0(x,y-t^Rq(x))$ is Fubini if and only if the $dxdy$ repeated integral is well-defined and equals $\int_K\int_K f(u,v)dudv$, which is precisely what is stated in (iv).
\end{proof}

With these reductions at hand it is straightforward to establish the conjecture in the case of non-negative depth:

\begin{theorem}\label{theorem_R_positive}
Let $a_1,a_2,n_1,n_2,h$ be data for the conjecture, and suppose that the associated depth is non-negative. Then the conjecture is true.
\end{theorem}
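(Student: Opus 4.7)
My plan is to invoke Lemma \ref{lemma_reduction} to reduce to condition (iii), and then show that the function in question is in fact the lift $F^0$ of some Schwartz--Bruhat function $F$ on $K\times K$, at which point Proposition \ref{proposition_properties_of_fubini_functions}(ii) finishes the argument. So, fix a polynomial $q\in\roi_F[X]$ with nonzero image in $K[X]$ and no constant term, an integer $R\ge 0$, and a Schwartz--Bruhat function $f$ on $K\times K$; I want to show that $\Phi(x,y):=f^0(x,y-t^Rq(x))$ is Fubini on $F\times F$ with repeated integral $\int_K\int_K f(u,v)\,dudv$.

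The key observation is that since $R\ge 0$ and $q(\roi_F)\subseteq\roi_F$, we have $t^Rq(x)\in\roi_F$ whenever $x\in\roi_F$. Consequently, for $x\in\roi_F$ the condition $y-t^Rq(x)\in\roi_F$ reduces to $y\in\roi_F$, and $\Phi$ vanishes off $\roi_F\times\roi_F$. On $\roi_F\times\roi_F$ the residues compute as
\[
\res{y-t^Rq(x)}=\begin{cases}\res{y}-\res{q}(\res{x})&\text{if }R=0,\\ \res{y}&\text{if }R\ge 1,\end{cases}
\]
so $\Phi=F^0$ where $F:K\times K\to\bb{C}$ is defined by $F(u,v)=f(u,v-\res{q}(u))$ when $R=0$ and $F=f$ when $R\ge 1$.

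Next I need to check that $F$ is Schwartz--Bruhat on $K\times K$. The case $R\ge 1$ is immediate. For $R=0$, the map $(u,v)\mapsto(u,v-\res{q}(u))$ is a polynomial automorphism of $K\times K$ whose inverse has polynomial components, so it preserves the Schwartz--Bruhat space in both the archimedean case (smooth functions of rapid decay, where polynomial growth in $\res{q}(u)$ is absorbed by rapid decay of $f$ in the second variable uniformly in compacts and the smoothness is clear) and the non-archimedean case (locally constant functions of compact support).

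Now I apply Proposition \ref{proposition_properties_of_fubini_functions}(ii): $\Phi=F^0$ is Fubini on $F\times F$, with repeated integral $\int_K\int_K F(u,v)\,dudv$. Finally, translation invariance of Haar measure on $K$ gives $\int_K F(u,v)\,dv=\int_K f(u,v)\,dv$ for each fixed $u$ (when $R=0$, by translating $v$ by $\res{q}(u)$), so Fubini's theorem on $K\times K$ yields $\int_K\int_K F\,dudv=\int_K\int_K f\,dudv$. This establishes condition (iii) of Lemma \ref{lemma_reduction}, hence the conjecture for all data of depth $R\ge 0$.

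The proof has no real obstacle; once one notices that non-negative depth forces $t^Rq$ to send $\roi_F$ into $\roi_F$, the non-linear change of variables collapses to an affine change of variables on the residue field, where Schwartz--Bruhat functions form an algebra well behaved under polynomial composition. The only minor point to be careful about is verifying the Schwartz--Bruhat property of $F$ in the archimedean case, but this is standard.
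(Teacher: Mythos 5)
Your proof is correct and rests on the same key observation as the paper's: when the depth is non-negative, $t^R q(\roi_F)\subseteq\roi_F$, so $\Phi$ is supported on $\roi_F\times\roi_F$ and descends to a shear $(u,v)\mapsto f(u,v-\res{h}(u))$ at the residue-field level. You package this more compactly by identifying $\Phi=F^0$ and invoking Proposition~\ref{proposition_properties_of_fubini_functions}(ii) directly, whereas the paper proves condition (iv) of Lemma~\ref{lemma_reduction} by computing the $dx$-integral slice by slice and then integrating in $y$; the mathematical content is the same.
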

\begin{proof}
By the reductions, we suppose that $q\in\roi_{F}[X]$ is a polynomial with no constant term and non-zero image in $ K [X]$, that $R\ge0$ is an integer, and we will prove condition (iv) of the lemma above. Write $h(X)=t^Rq(X)$, and let $f$ be Schwartz-Bruhat on $K\times K$.

The assumption on $R$ implies that all coefficients of $h$ are integral, and for $y\in F$ we have
\[ \{x\in \roi_{F}\,:\,y-h(x)\in\roi_{F}\}
	=\begin{cases}
		\roi_{F}&y\in\roi_{F},\\
		\varnothing & y\notin\roi_{F}.
	\end{cases}
\]
Hence if $y\in\roi_{F}$, we see that $x\mapsto f^0(x,y-h(x))$ is the lift of \[u\mapsto f(u,\res{y}-\res{h}(u))\] at $0,0$, where $\res{h}$ is the image of $h$ in $ K [X]$. If $y\notin \roi_{F}$, then $f^0(x,y-h(x))=0$ for all $x$ in $F$.

Integrating with respect to $x$ therefore obtains \[\int^F f^0(x,y-h(x))\,dx=\begin{cases} \int_K f(u,\res{y}-\res{h}(u))\,du & y\in \roi_{F}, \\ 0 & y\notin \roi_{F},\end{cases}\] which simply says that $y\mapsto \int^F f^0(x,y-h(x))\,dx$ is the lift of \[v\mapsto \int_K f(u,v-\res{h}(u))\,du\] at $0,0$.

Hence we may integrate with respect to $y$ to get
\begin{eqnarray*}\int^F\int^F f^0(x,y-h(x))\,dxdy
	&=\int_K\int_K f(u,v-\res{h}(u))\,dudv\\
	&=\int_K\int_K f(u,v-\res{h}(u))\,dvdu
\end{eqnarray*}
where the second line follows from the first by Fubini's theorem on $K\times K$. The result now follows by translation invariance of the measure on $K$ and lemma \ref{lemma_reduction}.
\end{proof}

\begin{remark}
This is convenient opportunity for an additional remark on ramification theory. According to our discussion at the end of section \ref{section_decompositions_and_ramificiation}, the conjecture relates to the monodromy or ramification of $q(X)-Y$ around the origin. Non-negative depth corresponds to the unramified situation, so the previous theorem could be stated as ``The conjecture is true in the unramified case''.
\end{remark}


\section{Negative depth}\label{section_negative_depth}
Having reduced the problem as far as possible and treated the relatively easy case, we discuss the case of negative depth in this section and the following section \ref{section_purely_insep}.

For this section and the next we fix the following notation: $R<0$ a negative integer as the depth; a polynomial $q\in\roi_{F}[X]$ without constant term and with non-zero image in $ K [X]$ as the normalised polynomial; and a Schwartz-Bruhat function $f$ on $K\times K$. Write $\Phi$ for the function of $F\times F$ given by $\Phi(x,y)=f^0(x,y-t^Rq(x))$, and $\res{q}$ for the image of $q$ in $ K [X]$.

In this section, we also assume that $\res{q}$ does not have everywhere vanishing derivative; since $\res{q}$ is non-zero and without constant term, this condition can only fail to be satisfied if $ K $ has positive characteristic $p$ and $q(X)$ is a purely inseparable polynomial i.e. a polynomial in $X^p$. We shall drop this assumption in section \ref{section_purely_insep} and see that conjecture \ref{conjecture} fails for such highly singular $q$.

We will study the conjecture for data of depth $R$ and normalised polynomial $q$ through condition (iv) of lemma \ref{lemma_reduction}. We will establish various conditions under which the conjecture holds.

\begin{remark}\label{remark_unferocious_case}
From the point of view of monodromy, we are now in the unferocious case; recall that an extension of complete discrete valuation fields is said to be {\em unferocious} so long as the residue field extension is separable. The author believes that this terminology is due to I.~Zhukov \cite{Zhukov2000}.
\end{remark}

Introduce two sets: the non-singular part of $q$ \[W_\sub{ns}=\{x\in\roi_{F}\,:\,\res{q}'(\res{x})\neq 0\}=\{x\in\roi_{F}\,:\,q'(x)\in \mult{\roi_{F}}\},\] and the singular part \[W_\sub{sing}=\{x\in\roi_{F}\,:\,\res{q}'(\res{x})=0\}=\{x\in\roi_{F}\,:\,q'(x)\in t\roi_{F}\}.\] By our assumption on $q$, the non-singular part $W_\sub{ns}$ is non-empty. The corresponding singular and non-singular parts of $\Phi$ are the restriction of $\Phi$ to these sets extended by zero elsewhere: \begin{align*} \Phi_\sub{ns}&=\Phi\;\Char_{W_\sub{ns}\times F}\\ \Phi_\sub{sing}&=\Phi\;\Char_{W_\sub{sing}\times F}.\end{align*} Note that $\Phi=\Phi_\sub{ns}+\Phi_\sub{sing}$.

The singular and non-singular parts are treated separately. Using the decomposition result \ref{proposition_non_singular_decomposition}, we will now explicitly evaluate $x\mapsto\Phi_\sub{ns}(x,y)$ for any $y\in F$:

\begin{proposition}
For all $y\in F$, the function $x\mapsto\Phi_\sub{ns}(x,y)$ is integrable, and $y\mapsto \int^F\Phi_\sub{ns}(x,y)\,dx$ is the lift of \[v\mapsto \sum_{\substack{\w\in K  \\\res{q}(\w)=v \\ \res{q}'(\w)\neq 0}} \int_K f(\w,-\res{q}'(\w)u)\,du\,X^{-R}\] at $0,R$; the sum is taken over all simple solutions $\w$ to $\res{q}(\w)=v$.

Moreover, this function $y\mapsto \int^F\Phi_\sub{ns}(x,y)\,dx$ is integrable on $F$, with \[\int^F\int^F\Phi_\sub{ns}(x,y)\,dxdy=\int_K\int_K f(\w,u)\,d\w du.\]
\end{proposition}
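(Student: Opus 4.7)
The plan is to fix $y\in F$, apply Proposition~\ref{proposition_non_singular_decomposition} to decompose the set of $x$ on which $\Phi_\sub{ns}(x,y)\neq 0$, evaluate $\Phi_\sub{ns}$ piecewise using the residue field approximation, and then apply Remark~\ref{remark_integral_of_lifted_function} twice. Since $q(\roi_F)\subseteq\roi_F$ and $R<0$, the function $x\mapsto f^0(x,y-t^Rq(x))$ vanishes identically unless $y\in t^R\roi_F$; in that case, writing $b=t^{-R}y\in\roi_F$, the condition $y-t^Rq(x)\in\roi_F$ becomes $q(x)\in b+t^{-R}\roi_F$. With $A=-R>0$, Proposition~\ref{proposition_non_singular_decomposition} gives
\[\{x\in W_\sub{ns}:q(x)\in b+t^{-R}\roi_F\}=\bigsqcup_{i=1}^r\check{\w}_i+t^{-R}\roi_F,\]
where $\w_1,\dots,\w_r$ are the simple roots of $\res{q}(X)-\res{b}$ in $K$ and $\check{\w}_i\in\roi_F$ are their Hensel lifts, satisfying $q(\check{\w}_i)=b$.

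On each piece, substituting $x=\check{\w}_i+t^{-R}s$ with $s\in\roi_F$ into the Taylor expansion of $q$ at $\check{\w}_i$ gives
\[y-t^Rq(x)=-q'(\check{\w}_i)\,s+(\mbox{element of $t\roi_F$}),\]
using $q(\check{\w}_i)=b$, $t^Rb=y$, and $t^{-R}\in t\roi_F$. Since $q'(\check{\w}_i)\in\mult{\roi_F}$, taking residues yields $\Phi_\sub{ns}(\check{\w}_i+t^{-R}s,y)=f(\w_i,-\res{q}'(\w_i)\res{s})$. Thus $\Phi_\sub{ns}(\cdot,y)$ is the finite sum of lifts $\sum_{i=1}^r f_i^{\check{\w}_i,-R}$ with $f_i(u):=f(\w_i,-\res{q}'(\w_i)u)$, and Remark~\ref{remark_integral_of_lifted_function} yields
\[\int^F\Phi_\sub{ns}(x,y)\,dx=X^{-R}\sum_{i=1}^r\int_Kf(\w_i,-\res{q}'(\w_i)u)\,du.\]
Since this depends on $y$ only through $\res{b}=\res{t^{-R}y}\in K$ and vanishes off $t^R\roi_F$, it is precisely the lift at $0,R$ of the $\bb{C}(X)$-valued function claimed in the proposition.

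For the final assertion, apply Remark~\ref{remark_integral_of_lifted_function} a second time: the $X^R$ produced by the outer lift cancels the internal $X^{-R}$, so
\[\int^F\int^F\Phi_\sub{ns}(x,y)\,dx\,dy=\int_K h(v)\,dv,\qquad h(v)=\sum_{\substack{\w\in K\\ \res{q}(\w)=v\\ \res{q}'(\w)\neq 0}}\int_K f(\w,-\res{q}'(\w)u)\,du.\]
Now apply the change-of-variables formula to the polynomial map $\res{q}\colon K\to K$ restricted to $K^\sub{ns}:=\{\w\in K:\res{q}'(\w)\neq 0\}$: summing over fibres of $\res{q}$ converts $\int_K h(v)\,dv$ into $\int_{K^\sub{ns}}|\res{q}'(\w)|\int_K f(\w,-\res{q}'(\w)u)\,du\,d\w$. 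The inner substitution $w=-\res{q}'(\w)u$ cancels the Jacobian $|\res{q}'(\w)|$, and since $K\setminus K^\sub{ns}$ is finite (hence null for Haar measure), the result is $\int_K\int_K f(\w,w)\,d\w\,dw$, as required.

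The main conceptual step is the last change of variables, which essentially requires the standing hypothesis that $\res{q}'$ is not identically zero, so that $K^\sub{ns}$ has full Haar measure in $K$. This is precisely the condition that breaks down in the purely inseparable case treated in Section~\ref{section_purely_insep}, foreshadowing the failure of Fubini discovered there. Verifying that $h$ is genuinely Haar-integrable on $K$ is a routine consequence of the Schwartz--Bruhat hypothesis on $f$.
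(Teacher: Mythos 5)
Your argument follows the same lines as the paper's proof. The paper, like you, fixes $y=t^Ry_0\in t^R\roi_F$, invokes Proposition~\ref{proposition_non_singular_decomposition} to write $\{x\in W_\sub{ns}:q(x)\in y_0+t^{-R}\roi_F\}=\bigsqcup_{i}\check{\w}_i+t^{-R}\roi_F$ with residue field approximation $X\mapsto\res{q}'(\w_i)X$, recognizes each restriction as a lift $f_i^{\check{\w}_i,-R}$ with $f_i(u)=f(\w_i,-\res{q}'(\w_i)u)$, and sums using Remark~\ref{remark_integral_of_lifted_function}. Your Taylor-expansion computation of the residue field approximation is exactly what underlies Proposition~\ref{proposition_non_singular_decomposition}, so this portion is correct and essentially identical.

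The one place where you go noticeably faster than the paper is the final step: you declare that the Haar-integrability of $h(v)=\sum_{\w:\res{q}(\w)=v,\,\res{q}'(\w)\neq0}\int_K f(\w,-\res{q}'(\w)u)\,du$ is a ``routine consequence'' of the Schwartz--Bruhat hypothesis and that the identity $\int_K h(v)\,dv=\int_K\int_K f(\w,u)\,d\w\,du$ follows from ``the change-of-variables formula.'' This is precisely the content of the paper's appendix, and the paper does not regard it as immediate: the global map $\res{q}\colon K^\sub{ns}\to K$ is not injective, so ``summing over fibres'' requires local analytic inverses $\lambda_i\colon V\to\Omega_i$ from the inverse function theorem (including a compactness argument to guarantee these $\Omega_i$ exhaust the fibre near each point), followed by a countable measurable partition of $\psi(K\setminus\Sigma)$ to globalize. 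Your sketch names the right mechanism, and the Jacobian cancellation $|\res{q}'(\w)|^{-1}\cdot|\res{q}'(\w)|=1$ is the correct heart of the computation, but as written the appeal to a ``change-of-variables formula'' skips the step where integrability of $h$ is actually established — that is the nontrivial part, and it is what the appendix proves. So: same proof, with the last step deferred where the paper carries it out in full.
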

\begin{proof}
Firstly, for $y\notin t^R\roi_{F}$, we have $\Phi(x,y)=0$ for all $x\in F$. Now fix $y=t^Ry_0\in t^R\roi_{F}$.

Then for $\Phi_\sub{ns}(x,y)$ to be non-zero, $x$ must belong to
\begin{align*}
\{x\in W_\sub{ns}\,:\,y-t^Rq(x)\in\roi_{F}\}
	&=\{x\in W_\sub{ns}\,:\,q(x)\in y_0+t^{-R}\roi_{F}\}\\
	&=\{x\in\roi_{F}\,:\,q(x)\in y_0+t^{-R}\roi_{F},\,\res{q}'(\res{x})\neq0\}\\
	&=\bigsqcup_{i=1}^r\check{\w}_i+t^{-R}\roi_{F},
\end{align*}
where $\check{\w}_i$ are lifts by Hensel of the simple solutions $\w_i$ in $ K $ to $q(\w)=\res{y}_0$ and the decomposition is provided by the decomposition result \ref{proposition_non_singular_decomposition}; that proposition also implies that there are commutative diagrams
\[\begin{CD}
\check{\w}_i+t^{-R}\roi_{F} @>q>> y_0+t^{-R}\roi_{F} \\
@VVV  @VVV  \\
 K  @>>\res{q}'(\w_i)X>  K .\\
\end{CD}\]

So we write $\Phi_\sub{ns}(x,y)=\sum_{i=1}^r g_i(x)$, where $g_i$ is the restriction of $x\mapsto\Phi_\sub{ns}(x,y)$ to $\check{\w}_i+t^{-R}\roi_{F}$, extended by zero elsewhere; if $x=\check{\omega}_i+t^{-R}x_0$ belongs to $\check{\w}_i+t^{-R}\roi_{F}$ then the commutative diagram implies \[\Phi_\sub{ns}(x,y)=g_i(x)=f(\w_i,-\res{q}'(\w_i)\res{x}_0).\] Therefore $g_i$ is the lift of the Haar integrable function \[u\mapsto f(\w_i,-\res{q}'(\w_i)u)\] at $\check{\w}_i,-R$, the integral of which is \[\int^F g_i(x)\,dx=\int_K f(\w_i,-\res{q}'(\w_i)u)\,du\,X^{-R}\] by remark \ref{remark_integral_of_lifted_function}. By linearity, $x\mapsto\Phi_\sub{ns}(x,y)$ is integrable, with \[\int^F\Phi_\sub{ns}(x,y)\,dx=\sum_{i=1}^r \int_K f(\w_i,-\res{q}'(\w_i)u)\,du\,X^{-R}\tag{$\ast$}.\]

The previous paragraph considered a fixed value of $y=t^Ry_0$ in $t^R\roi_{F}$. We now consider the integral ($\ast$) as a function of $y$; that is, \[y\mapsto\int^F\Phi_\sub{ns}(x,y)\,dx.\] Recall that $\w_1,\dots,\w_r$ are the simple solutions in $K$ to $q(\w)=\res{y}_0$. So we may rewrite the integral as \[\int^F \Phi_\sub{ns}(x,y)\,dx=\sum_\w \int_K f(\w,-\res{q}'(\w)u)\,du\,X^{-R},\] where the sum is over the finitely many $\w$ in $K$ which satisfy $\res{q}(\w)=\res{y}_0$ and $\res{q}'(\w)\neq0$.

Finally, by the appendix, the function $v\mapsto\sum_{\substack{\w:\,\res{q}(\w)=v\\\res{q}'(\w)\neq0}}\int_K f(\w,-\res{q}'(\w)u)\,du$ is in fact Haar integrable on $K$ with integral \[\int_K\sum_{\substack{\w:\,\res{q}(\w)=v\\\res{q}'(\w)\neq0}}\int_K f(\w,-\res{q}'(\w)u)\,dudv=\int_K\int_K f(\w,u)\,d\w du.\] Therefore $y\mapsto \int^F\Phi_\sub{ns}(x,y)\,dx$ is integrable on $F$, with \[\int^F\int^F\Phi_\sub{ns}(x,y)\,dxdy=\int_K\int_K f(\w,u)\,d\w du.\]
\end{proof}

The proposition has an immediate corollary:

\begin{corollary}\label{corollary_nowhere_singular}
If $\res{q}'(X)$ is no-where vanishing on $ K $, then $\Phi$ is Fubini.
\end{corollary}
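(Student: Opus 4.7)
The plan is to observe that the corollary follows almost immediately by combining the preceding proposition with the reduction lemma \ref{lemma_reduction}. The hypothesis that $\res{q}'(X)$ is nowhere vanishing on $K$ is exactly the statement that $W_\sub{sing} = \{x \in \roi_F : \res{q}'(\res{x}) = 0\}$ is empty, so $\Phi_\sub{sing} \equiv 0$ on $F \times F$ and therefore $\Phi = \Phi_\sub{ns}$.

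With this identification, the preceding proposition delivers, for our fixed normalised polynomial $q$, depth $R$, and Schwartz-Bruhat $f$, precisely the three assertions that constitute condition (iv) of lemma \ref{lemma_reduction}: for every $y \in F$ the section $x \mapsto \Phi(x,y)$ is integrable; the resulting function $y \mapsto \int^F \Phi(x,y)\,dx$ is integrable on $F$; and its integral equals $\int_K\int_K f(u,v)\,dudv$. Since this holds for every Schwartz-Bruhat $f$, condition (iv) of the lemma is verified.

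By the equivalence (iii) $\Leftrightarrow$ (iv) established in lemma \ref{lemma_reduction}, condition (iii) holds, i.e.\ $(x,y) \mapsto f^0(x, y - t^R q(x)) = \Phi(x,y)$ is Fubini on $F \times F$. This is the desired conclusion.

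There is no serious obstacle here: the technical content is entirely encapsulated in the preceding proposition, whose proof rested on the non-singular decomposition result \ref{proposition_non_singular_decomposition} and the Haar-integrability computation recorded in the appendix. The corollary merely records the clean case in which the singular contribution is absent.
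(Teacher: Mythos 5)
Your proof is correct and follows exactly the paper's own argument: identify $\Phi=\Phi_\sub{ns}$ because $W_\sub{sing}$ is empty, then invoke the preceding proposition together with lemma \ref{lemma_reduction} via its condition (iv). The paper states this in one sentence; you have simply spelled out the same steps in more detail.
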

\begin{proof}
If $\res{q}'(X)$ has no roots in $ K $, then $\Phi=\Phi_\sub{ns}$, so the previous proposition and lemma \ref{lemma_reduction} imply $\Phi$ is Fubini.
\end{proof}

More generally, the proposition reduces the problem to showing that the singularities of $\res{q}$ give no contribution to the integrals:

\begin{corollary}\label{corollary_reduction_to_singular_part}
The function $\Phi$ is Fubini if and only if the following hold: for each $y\in F$, the function $x\mapsto\Phi_\sub{sing}(x,y)$ is integrable, then that $y\mapsto\int^F\Phi_\sub{sing}(x,y)\,dx$ is integrable, and finally that \[\int^F\int^F \Phi_\sub{sing}(x,y)\,dxdy=0.\]
\end{corollary}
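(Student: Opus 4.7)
The plan is to deduce the corollary directly from the preceding proposition together with the linearity of integration and condition (iv) of lemma \ref{lemma_reduction}. By construction $\Phi = \Phi_\sub{ns} + \Phi_\sub{sing}$, and the proposition tells us that for every $y\in F$ the function $x\mapsto\Phi_\sub{ns}(x,y)$ is integrable, that $y\mapsto\int^F\Phi_\sub{ns}(x,y)\,dx$ is integrable, and that
\[\int^F\int^F \Phi_\sub{ns}(x,y)\,dxdy=\int_K\int_K f(\w,u)\,d\w du.\]

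First I would use $\bb{C}(X)$-linearity of the space $\cal{L}(F)$ of integrable functions on $F$ to translate each step of condition (iv) of lemma \ref{lemma_reduction} into a statement about $\Phi_\sub{sing}$ alone. Concretely: since $x\mapsto\Phi_\sub{ns}(x,y)$ is always integrable, the function $x\mapsto\Phi(x,y)$ is integrable if and only if $x\mapsto\Phi_\sub{sing}(x,y)$ is; assuming that holds, we have
\[\int^F\Phi(x,y)\,dx=\int^F\Phi_\sub{ns}(x,y)\,dx+\int^F\Phi_\sub{sing}(x,y)\,dx,\]
so the left side is an integrable function of $y$ precisely when the second summand is, the first already being integrable by the proposition.

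Finally I would subtract the two repeated integrals. Using the proposition's evaluation of the $\Phi_\sub{ns}$ contribution, the equality $\int^F\int^F\Phi(x,y)\,dxdy=\int_K\int_K f(u,v)\,dudv$ required by condition (iv) of lemma \ref{lemma_reduction} is equivalent to $\int^F\int^F\Phi_\sub{sing}(x,y)\,dxdy=0$. Combining these three equivalences and invoking lemma \ref{lemma_reduction} to pass between Fubini-ness of $\Phi$ (in its incarnation as $(x,y)\mapsto f^0(x,y-t^Rq(x))$) and condition (iv) gives the corollary. There is no real obstacle here: the content of the corollary is purely a bookkeeping consequence of the preceding proposition, and the only care needed is to invoke linearity in the right order so that integrability of each piece follows from the integrability of the other.
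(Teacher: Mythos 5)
Your proposal is correct and follows exactly the same route as the paper: decompose $\Phi=\Phi_\sub{ns}+\Phi_\sub{sing}$, use the preceding proposition to handle the $\Phi_\sub{ns}$ contribution, apply linearity to transfer integrability statements to $\Phi_\sub{sing}$, and conclude by equivalence with condition (iv) of lemma \ref{lemma_reduction}. You have simply spelled out the bookkeeping that the paper's one-line proof leaves implicit.
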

\begin{proof}
This follows immediately from the identity $\Phi=\Phi_\sub{ns}+\Phi_\sub{sing}$, the previous proposition, lemma \ref{lemma_reduction}, and linearity.
\end{proof}

We may verify the first requirement of corollary \ref{corollary_reduction_to_singular_part} using the decomposition result \ref{proposition_singular_decomposition}:

\begin{proposition}\label{proposition_integral_of_singular_sections}
For each $y\in F$, the function $x\mapsto\Phi_\sub{sing}(x,y)$ is integrable, and we have the following explicit descriptions of its integral:

If $y\notin t^R\roi_{F}$, or if $y\in t^R\roi_{F}$ but $\{x\in\roi_{F}\,:\,q(x)\in t^{-R}y+t^{-R}\roi_{F},\,\res{q}'(\res{x})=0\}$ is empty, then $\int^F\Phi_\sub{sing}(x,y)\,dx=0$.

Otherwise we have $y\in t^R\roi_{F}$ and write \[\{x\in\roi_{F}\,:\,q(x)\in t^{-R}y+t^{-R}\roi_{F},\,\res{q}'(\res{x})=0\}=\bigsqcup_{j=1}^N a_j+t^{c_j}\roi_{F},\] where the decomposition (which depends on $y$) is provided by the decomposition result \ref{proposition_singular_decomposition}; let $\psi_j\in K[X]$ for $j=1,\dots,N$ denote the corresponding residue field actions i.e.
\[\begin{CD}
a_j+t^{c_j}\roi_{F} @>q>> t^{-R}y+t^{-R}\roi_{F} \\
@VVV  @VVV  \\
 K  @>>\psi_j>  K .\\
\end{CD}\]
commutes. Then \[\int^F\Phi_\sub{sing}(x,y)\,dx={\sum_j^{}}' \int_K f(\res{a}_j,-\psi_j(u))\,du\,X^{c_j},\] where the summation $\sum'$ is over those $j\in\{1,\dots,N\}$ for which $\psi_j$ is not a constant polynomial.
\end{proposition}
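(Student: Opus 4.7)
The plan is to evaluate $x\mapsto\Phi_\sub{sing}(x,y)$ explicitly for each fixed $y\in F$, splitting $F$ into the three regimes described in the statement and, in the non-trivial regime, appealing to the singular decomposition result of Proposition \ref{proposition_singular_decomposition} together with the lift formalism from Remark \ref{remark_integral_of_lifted_function}.

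First I would dispose of the two null cases. Because $q(x)\in\roi_{F}$ for every $x\in\roi_{F}$ and $R<0$, one has $t^Rq(x)\in t^R\roi_{F}$; so if $y\notin t^R\roi_{F}$ then $y-t^Rq(x)\notin\roi_{F}$ for every $x\in\roi_{F}$, forcing $f^0(x,y-t^Rq(x))=0$. If instead $y=t^Ry_0\in t^R\roi_{F}$, the condition $y-t^Rq(x)\in\roi_{F}$ rearranges to $q(x)\in y_0+t^{-R}\roi_{F}$, so the support of $x\mapsto\Phi_\sub{sing}(x,y)$ is exactly $\{x\in\roi_{F}:q(x)\in y_0+t^{-R}\roi_{F},\,\res{q}'(\res{x})=0\}$; emptiness of this set kills the integrand and yields the second null case.

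In the remaining case I apply Proposition \ref{proposition_singular_decomposition} with $b=y_0=t^{-R}y$ and $A=-R\ge 1$ to decompose the support as $\bigsqcup_j a_j+t^{c_j}\roi_{F}$ with residue field approximations $\psi_j\in K[X]$. For $x=a_j+t^{c_j}x_0\in a_j+t^{c_j}\roi_{F}$, write $q(x)=y_0+t^{-R}z$ with $z\in\roi_{F}$; then $y-t^Rq(x)=-z$, and commutativity of the defining diagram of $\psi_j$ gives $\res{z}=\psi_j(\res{x}_0)$. Hence on this piece $\Phi_\sub{sing}(x,y)=f(\res{a}_j,-\psi_j(\res{x}_0))$, i.e.\ the restriction of $\Phi_\sub{sing}(\cdot,y)$ to $a_j+t^{c_j}\roi_{F}$ is the lift at $a_j,c_j$ of the function $g_j(u):=f(\res{a}_j,-\psi_j(u))$. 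When $\psi_j$ is non-constant, $g_j$ is again Schwartz-Bruhat on $K$ (Schwartz-Bruhat functions are closed under precomposition with a non-constant polynomial in both the archimedean and non-archimedean cases), so Remark \ref{remark_integral_of_lifted_function} contributes $\int_K f(\res{a}_j,-\psi_j(u))\,du\,X^{c_j}$. When $\psi_j$ is constant with value $c_0$, the restriction equals $f(\res{a}_j,-c_0)\Char_{a_j+t^{c_j}\roi_{F}}$, which lies in $\cal{L}(F)$ by translation and scaling from $\Char_{\roi_{F}}$ and has integral $0$ by the calculation of Example \ref{eg_null_functions}(i). Summing these pieces and retaining only the indices with $\psi_j$ non-constant produces the stated formula; integrability of $x\mapsto\Phi_\sub{sing}(x,y)$ then follows from $\bb{C}(X)$-linearity of a finite sum of elements of $\cal{L}(F)$.

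The main obstacle is the bookkeeping in pulling the residue field approximation through the definition of $\Phi$, together with the separate treatment of the indices where $\psi_j$ is constant: one cannot appeal to the lift formalism naively in that degenerate case (a non-zero constant function need not be Haar integrable on a non-compact $K$), and it is precisely the vanishing of the integral of $\Char_{a_j+t^{c_j}\roi_{F}}$ which legitimises restricting the sum $\sum'$ in the statement to those indices where $\psi_j$ is non-constant.
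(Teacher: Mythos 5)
Your proof is correct and follows essentially the same route as the paper: reduce to the support set via Proposition \ref{proposition_singular_decomposition}, identify the restriction to each $a_j+t^{c_j}\roi_{F}$ as a lift of $u\mapsto f(\res{a}_j,-\psi_j(u))$, split off the constant-$\psi_j$ pieces as null via Example \ref{eg_null_functions}(i), and sum by linearity. The only addition is that you make explicit the closure of Schwartz-Bruhat functions under precomposition with a non-constant polynomial, which the paper leaves implicit.
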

\begin{proof}
By the definition of a lifted function, $f^0$ vanishes off $\roi_{F}\times\roi_{F}$. So if $\{x\in\roi_{F}\,:\,q(x)\in t^{-R}y+t^{-R}\roi_{F},\,\res{q}'(\res{x})=0\}$ is empty for some $y$ then $x\mapsto\Phi_\sub{sing}(x,y)$ is everywhere zero and hence integrable; note that this set is certainly empty if $y\notin t^R\roi_{F}$.

Now fix $y=t^Ry_0\in t^R\roi_{F}$ for the remainder of the proof. Then for $x\in F$, $\Phi_\sub{sing}(x,y)$ vanishes unless $x$ belongs to
\begin{align*}
\{x\in W_\sub{sing}\,:\,y-t^Rq(x)\in\roi_{F}\}
	&=\{x\in W_\sub{sing}\,:\,q(x)\in y_0+t^{-R}\roi_{F}\}\\
	&=\{x\in\roi_{F}\,:\,q(x)\in y_0+t^{-R}\roi_{F},\,\res{q}'(\res{x})=0\}\\
	&=\bigsqcup_{j=1}^N a_j+t^{c_j}\roi_{F},
\end{align*}
where the decomposition is as in the statement of the proposition; let $\psi_j$ be the corresponding residue field approximations. Denote by $g_j$ the restriction of $x\mapsto\Phi_\sub{sing}(x,y)$ to $a_j+t^{c_j}\roi_{F}$, extended by zero elsewhere. We shall now prove that each $g_j$ is an integrable function. Indeed, $g_j$ vanishes off $a_j+t^{c_j}\roi_{F}$, and if $x=a_j+t^{c_j}x_0\in a_j+t^{c_j}\roi_{F}$, then
\begin{align*}
g_j(x)&=f^0(a_j+t^{c_j}x_0, t^Ry_0-t^Rq(a_j+t^{c_j}x_0))\\
	&=f^0(a_j+t^{c_j}x_0,t^R(y_0-q(a_j+t^{c_j}x_0))\\
	&=f(\res{a_j+t^{c_j}x_0},\res{t^R(y_0-q(a_j+t^{c_j}x_0))})\\
	&=f(\res{a}_j,-\psi_j(\res{x}_0))
\end{align*}
by definition of the residue field approximation $\psi_j$. Therefore $g_j$ is a lifted function: it is the lift of $u\mapsto f(\res{a}_j,-\psi_j(u))$ at $a_j,c_j$. Further, since we assumed $f$ is Schwartz-Bruhat, this function of $u$ is Haar integrable on $K$ \emph{so long as $\psi_j$ is not constant}, and therefore $g_j$ is integrable on $F$, with \[\int^F g_j(x)\,dx=\int_K f(\res{a}_j,-\psi_j(u))\,du\,X^{c_j}.\] However, if $\psi_j$ is a constant polynomial, then $g_j=g_j(a_j)\Char_{a_j+t^{c_j}\roi_{F}}$, which is integrable with zero integral by example \ref{eg_null_functions}.

By linearity, $x\mapsto\Phi_\sub{sing}(x,y)$ is integrable, with \[\int^F\Phi_\sub{sing}(x,y)\,dx={\sum_j^{}}' \int_K f(\res{a}_j,-\psi_j(u))\,du\,X^{c_j},\] as required. We emphasise again that the decomposition $a_j,c_j,\psi_j$ which we have used to express the integral \emph{depends on $y$}.
\end{proof}

\begin{corollary}\label{corollary_R=-1}
If $R=-1$ then $\Phi$ is Fubini.
\end{corollary}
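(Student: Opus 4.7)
The plan is to reduce everything to Corollary \ref{corollary_reduction_to_singular_part} and then read off the conclusion directly from the explicit formula in Proposition \ref{proposition_integral_of_singular_sections}. Thanks to that corollary, $\Phi$ is Fubini provided we can verify: for each $y\in F$, the function $x\mapsto \Phi_\sub{sing}(x,y)$ is integrable (which is already furnished by Proposition \ref{proposition_integral_of_singular_sections}); the function $y\mapsto\int^F\Phi_\sub{sing}(x,y)\,dx$ is integrable; and the resulting double integral vanishes. So only the last two statements require attention, and both will follow if we can show that $\int^F\Phi_\sub{sing}(x,y)\,dx=0$ for every $y\in F$.

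The key observation is that for $R=-1$, the relevant depth appearing in Proposition \ref{proposition_integral_of_singular_sections} is $A=-R=1$. I would therefore inspect the $A=1$ case of the singular decomposition in the proof of Proposition \ref{proposition_singular_decomposition}. There the decomposition is $\bigsqcup_j a_j+t\roi_F$ where the $a_j$ are lifts of the distinct singular roots of $\res q(X)=\res{y}_0$, and, crucially, the Taylor expansion $q(a_j+tX)=q(a_j)+q'(a_j)tX+\cdots$ combined with $q'(a_j)\in t\roi_F$ forces $q(a_j+tx)\in q(a_j)+t^2\roi_F$ for every $x\in\roi_F$. Consequently every residue field approximation $\psi_j$ arising in this case is a \emph{constant} polynomial; this is already recorded explicitly in the proof of Proposition \ref{proposition_singular_decomposition} (the $A=1$ paragraph).

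Plugging this into the formula of Proposition \ref{proposition_integral_of_singular_sections}, the summation $\sum'_j$ — being taken only over those $j$ for which $\psi_j$ is non-constant — is empty for every $y\in F$. Hence
\[\int^F\Phi_\sub{sing}(x,y)\,dx=0\qquad\text{for all }y\in F.\]
The function $y\mapsto 0$ is trivially integrable with zero integral, so the two remaining hypotheses of Corollary \ref{corollary_reduction_to_singular_part} hold, and $\Phi$ is Fubini.

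There is essentially no obstacle: the entire content of the corollary is the bookkeeping remark that when $R=-1$ the singular decomposition happens at depth $1$, and at depth $1$ the residue field approximations have no room to carry any polynomial structure beyond a constant. The heavy lifting was already done in Propositions \ref{proposition_singular_decomposition} and \ref{proposition_integral_of_singular_sections}; this result is the simplest corollary one can extract from that machinery, and it also makes clear what will go wrong for $R<-1$ (where Taylor expansion gives room for non-constant $\psi_j$) and for purely inseparable $\res q$ (where the non-singular part $W_\sub{ns}$ is empty and the whole splitting $\Phi=\Phi_\sub{ns}+\Phi_\sub{sing}$ degenerates).
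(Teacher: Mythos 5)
Your proof is correct and follows exactly the same route as the paper's: invoke Corollary \ref{corollary_reduction_to_singular_part}, observe that the $A=1$ case of the proof of Proposition \ref{proposition_singular_decomposition} shows all residue field approximations are constant, so the restricted sum $\sum'_j$ in Proposition \ref{proposition_integral_of_singular_sections} is empty and $\int^F\Phi_\sub{sing}(x,y)\,dx=0$ for every $y$. The paper's proof is simply a more terse statement of the same argument; you have spelled out the Taylor-expansion reason for the constancy of the $\psi_j$, which is a helpful elaboration but not a new idea.
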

\begin{proof}
Looking at the proof of decomposition result \ref{proposition_singular_decomposition}, we see that if $R=-1$ (i.e. $A=1$ in the notation of that result), then all the residue field approximations are constant. So by the previous proposition, $\int^F\Phi_\sub{sing}(x,y)\,dx=0$ for all $y\in F$. Corollary \ref{corollary_reduction_to_singular_part} implies $\Phi$ is Fubini.
\end{proof}

By proposition \ref{proposition_integral_of_singular_sections} we now have a well defined function $y\mapsto\int^F\Phi_\sub{sing}(x,y)\,dx$; to establish the validity of the conditions of corollary \ref{corollary_reduction_to_singular_part} the next step is to prove that this function of $y$ is integrable. The complication in establishing its integrability is that we lack explicit information on the variation of the sets \[\{x\in\roi_{F}\,:\,q(x)\in y_0+t^{-R}\roi_{F},\;\res{q}'(\res{x})=0\}\] as $y_0$ runs though $\roi_{F}$.

We now present some results and calculations which reveal considerable insight into why $y\mapsto\int^F\Phi_\sub{sing}(x,y)\,dx$ can in fact fail to be integrable. We shall also give evidence that this phenomenon is merely a result of the integration theory not yet being sufficiently developed.

\begin{proposition}\label{proposition_finite_image}
Assume that there exist $b_1,\dots,b_m\in\roi_{F}$ such that if $b\in\roi_{F}$ and $\{x\in\roi_{F}:q(x)\in b+t^{-R}\roi_{F},\;\res{q}'(\res{x})=0\}$ is non-empty, then $b\equiv b_i\mod t^{-R}\roi_{F}$ for some $i\in\{1,\dots,m\}$. Note that this is satisfied if $R=-1$ or $-2$, by corollary \ref{lemma_depth_1_and_2}.

Then $y\mapsto\int^F\Phi_\sub{sing}(x,y)\,dx$ is a finite sum of lifts of functions of the form \[v\mapsto \int_K f(a,-\psi(u)-v)\,du\,X^{c}\] for $\psi\in K[X]$ non-constant, $a\in K$, and $c\ge 1$.
\end{proposition}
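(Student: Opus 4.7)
The plan is to apply Proposition \ref{proposition_integral_of_singular_sections} to $\int^F\Phi_\sub{sing}(x,y)\,dx$ and to exploit the hypothesis on the $b_i$ to show that, as $y$ varies, the underlying decomposition is locally constant in $y$ while the residue field approximations vary only by an additive constant. Once this has been established, the expression from Proposition \ref{proposition_integral_of_singular_sections} will fit directly into the form of a lift.

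By Proposition \ref{proposition_integral_of_singular_sections}, the function $y\mapsto\int^F\Phi_\sub{sing}(x,y)\,dx$ vanishes unless $y=t^Ry_0$ with $y_0\in\roi_{F}$ and the set $\{x\in\roi_{F}:q(x)\in y_0+t^{-R}\roi_{F},\,\res{q}'(\res{x})=0\}$ is non-empty. By hypothesis, this forces $y_0\equiv b_i\mod t^{-R}\roi_{F}$ for some $i$, equivalently $y\in t^Rb_i+\roi_{F}$. Discarding redundant representatives, I may assume the $b_i$ lie in pairwise distinct cosets of $t^{-R}\roi_{F}$, so the sets $t^Rb_i+\roi_{F}$ are disjoint. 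For $y$ in such a coset we have $y_0+t^{-R}\roi_{F}=b_i+t^{-R}\roi_{F}$, and hence the decomposition $\bigsqcup_j a_j^{(i)}+t^{c_j^{(i)}}\roi_{F}$ furnished by Proposition \ref{proposition_singular_decomposition} depends only on $i$. Moreover, writing $c=y-t^Rb_i\in\roi_{F}$ I claim that the residue field approximation at $y$ is
\[\psi_j^{(y)}=\psi_j^{(b_i)}-\res{c}=\psi_j^{(b_i)}-\res{(y-t^Rb_i)}.\]
This follows from the observation that the two parametrisations $y_0+t^{-R}z\mapsto\res{z}$ and $b_i+t^{-R}z'\mapsto\res{z'}$ of the common target $y_0+t^{-R}\roi_{F}=b_i+t^{-R}\roi_{F}$ differ by the shift $z'=z+c$, and so differ by $\res{c}$ after reduction. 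In particular $\psi_j^{(y)}$ is non-constant precisely when $\psi_j^{(b_i)}$ is, independently of $y$.

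Feeding this back into Proposition \ref{proposition_integral_of_singular_sections}, for $y\in t^Rb_i+\roi_{F}$ I obtain
\[\int^F\Phi_\sub{sing}(x,y)\,dx={\sum_j^{}}'\int_K f\bigl(\res{a}_j^{(i)},-\psi_j^{(i)}(u)+\res{(y-t^Rb_i)}\bigr)\,du\,X^{c_j^{(i)}},\]
with $\psi_j^{(i)}:=\psi_j^{(b_i)}$ and the primed sum ranging over those $j$ with $\psi_j^{(i)}$ non-constant. By disjointness of the cosets, summing over $i$ recovers the whole function. Each summand is, by construction, the lift at $(t^Rb_i,0)$ of the $\bb{C}(X)$-valued function $v\mapsto\int_K f(\res{a}_j^{(i)},-\psi_j^{(i)}(u)+v)\,du\,X^{c_j^{(i)}}$, which is of the required form (up to the harmless sign substitution $v\leftrightarrow -v$). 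Taking the sum over $i$ and $j$ gives the claimed presentation.

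The main obstacle is the shift formula for $\psi_j^{(y)}$: while conceptually transparent, a clean derivation requires working with the intrinsic defining property of the residue field approximation rather than tracing through the induction of Proposition \ref{proposition_singular_decomposition}. A secondary point, not strictly part of the statement but needed for the lifts to live in $\cal{L}(F)$, is to verify Haar integrability on $K$ of each function $v\mapsto\int_K f(a,-\psi(u)+v)\,du$; this does not follow from a naive Fubini argument (since $u$ ranges over all of $K$ and the double integral diverges) and should be handled by the same appendix already invoked in the proof of the preceding proposition.
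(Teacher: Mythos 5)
Your proof is correct and follows essentially the same route as the paper: split $y$ according to the finitely many cosets $t^R b_i+\roi_F$, observe the decomposition is constant on each coset, and derive the shift $\psi_j^{(y)}=\psi_j^{(b_i)}-\res{(y-t^Rb_i)}$ by comparing the two vertical reduction maps onto the same translated fractional ideal (you in fact get the sign in the integrand right, where the paper's displayed formula has a harmless slip). One caution about your closing remark, though: the functions $v\mapsto\int_K f(a,-\psi(u)-v)\,du$ are in general \emph{not} Haar integrable on $K$ (they are convolutions with the unbounded pushforward of Haar measure under $\psi$), and the appendix does not rescue this --- it treats the quite different function $J(v)=\sum_{\w:\psi(\w)=v,\,\psi'(\w)\ne0}\int_K f(\w,-\psi'(\w)u)\,du$, which \emph{is} integrable. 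The proposition deliberately stops at describing the shape of $y\mapsto\int^F\Phi_\sub{sing}(x,y)\,dx$ without asserting membership in $\cal{L}(F)$; the possible failure of integrability is exactly what remark \ref{remark_nonsense} goes on to discuss, so the ``secondary point'' you flag is not a gap to be filled but the intended phenomenon.
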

\begin{proof}
Let $b_1,\dots,b_m$ be as in the statement of the proposition; we also assume that $b_1,\dots,b_m$ are distinct modulo $t^{-R}\roi_{F}$.

By proposition \ref{proposition_integral_of_singular_sections}, if $y\in F$ is not in $b_it^R+\roi_{F}$ for some $i$, then $\int^F\Phi_\sub{sing}(x,y)\,dx=0$. So letting $G_i$ be the restriction of $y\mapsto\int^F\Phi_\sub{sing}(x,y)\,dx$ to $b_it^R+\roi_{F}$, extended by zero elsewhere, we have an equality of functions of $y$: \[\int^F\Phi_\sub{sing}(x,y)\,dx=\sum_{i=1}^mG_i(y).\]

For convenience of notation, we now fix some $i$ and write $G=G_i$, $b=b_i$. Write \[\{x\in\roi_{F}:q(x)\in b+t^{-R}\roi_{F},\;\res{q}'(\res{x})=0\}=\bigsqcup_{j=1}^Na_j+t^{c_j}\roi_{F},\] with residue field approximations $\psi_j$. We claim that $G$ is the lift of \[v\mapsto\stackrel{N}{{\sum_{j=1}}'} \int_K f(\res{a}_j,-\psi_j(u)-v)\,du\,X^{c_j}\] at $b^{-R},0$ (the sum ${\sum}'$ is restricted to those $j$ such that $\psi_j$ is not constant). So suppose $y=bt^R+y_0\in bt^R+\roi_{F}$. Then of course $yt^{-R}+t^{-R}\roi_{F}=b+t^{-R}\roi_{F}$, and so \[\{x\in\roi_{F}:q(x)\in yt^{-R}+t^{-R}\roi_{F},\res{q}'(\res{x})=0\}=\bigsqcup_{j=1}^Na_j+t^{c_j}\roi_{F},\] with the residue field approximations of this decomposition given by
\[\begin{CD}
a_j+t^{c_j}\roi_{F} @>q>> t^{-R}y+t^{-R}\roi_{F} \\
@VVV  @VVV  \\
 K  @>>\psi_j(X)-\res{y}_0>  K .\\
\end{CD}\]
Proposition \ref{proposition_integral_of_singular_sections} implies \[G(y)=\int^F\Phi_\sub{sing}(x,y)\,dx=\stackrel{N}{{\sum_{j=1}}'} \int_K f(\res{a}_j,-\psi_j(u)-\res{y}_0)\,du\,X^{c_j},\] proving the claim, and completing the proof.
\end{proof}

\begin{remark}\label{remark_nonsense}
Suppose that the assumption of the previous proposition is satisfied. Then to establish integrability of $y\mapsto\int^F\Phi_\sub{sing}(x,y)\,dx$ and prove it has zero integral, it is enough to prove that for any $a\in K$, non-constant $\psi\in K[X]$, and $g$ Schwartz-Bruhat on $K$, the lift of $v\mapsto \int_K g(-\psi(u)-v)\,du$ at $0,0$ is integrable and has zero integral; let $G$ denote this function of $F$, that is,
\begin{align*}
	G:F&\to \bb{C}\\
	y&\mapsto \begin{cases}
		\int_K g(-\psi(u)-\res{y})\,du & y\in\roi_{F}, \\
		0 & \mbox{otherwise.}
		\end{cases}
\end{align*}

\emph{Then $G$ may not be integrable on $F$}. Indeed, it is not hard to show that if $G$ were to belong to $\cal{L}(F)$, the space of integrable functions, then $G$ would be the lift at $0,0$ of a Haar integrable function on $K$; this Haar integrable function would then have to be $v\mapsto \int_K g(-\psi(u)-v)\,du$, but the arguments to follow reveal that this function is Haar integrable if and only if $g=0$.

We now offer the following nonsense argument for why $G$ should be integrable, and why $\int^F G(y)\,dy$ should be zero. As a lifted function, we evaluate the integral of $G$ by theorem \ref{theorem_main_properties_of_integral} to give \[\int^F G(y)\,dy=\int_K\int_K g(-\psi(u)-v)\,dudv\] and then apply Fubini's theorem for $K$ and translation invariance of the integral to deduce
\begin{align*}
\int^F G(y)\,dy
	&=\int_K\int_K g(-\psi(u)-v)\,dvdu\\
	&=\int_K\int_K g(-v)\,dvdu\\
	&=\int_K\,du\int_Kg(v)dv.
\end{align*}
At this point it is clear why our arguments are not valid: the function $v\mapsto \int_K g(-\psi(u)-v)\,du$ is not integrable on $K$. However, we may apply similar nonsense to the function $\Char_{\roi_{F}}$, which is the lift of $\Char_{K}$, to deduce \[\int^F\Char_{\roi_{F}}(x)\,dx=\int_K\,du.\] Finally, example \ref{eg_null_functions}(i) implies $\int^F\Char_{\roi_{F}}(x)\,dx=0$ and so
\begin{align*}
\int^F G(y)\,dy
	&=\int_K\,du\int_Kg(v)dv\\
	&=\int^F\Char_{\roi_{F}}(x)\,dx\int_Kg(v)dv\\
	&=0.
\end{align*}

It should be possible to extend the measure theory on $F$ so that these manipulations become rigorous. The key idea is that from the vantage point of $F$, the residue field $K$ truly has zero measure, as used above; so one expects Fubini's theorem on $K$ to hold for certain functions which, though not Haar integrable, are integrable in some sense after imposing the condition $\int_K\,du=0$. Once this is properly incorporated into the measure, the theory should become considerably richer. It should also yield new methods to treat divergent integrals on $K$ by lifting them to $F$, applying Fubini theorem there, and then pulling the results back down to $K$; this would be a refreshing contrast to the main techniques so far, which have centred around reducing integrals on $F$ down to $K$.
\end{remark}

\begin{example}\label{example_nonsense}
Now we treat an example of depth $-3$ in which the assumption of proposition \ref{proposition_finite_image} is not satisfied. We assume $R=-3$, $q(X)=X^2$, and $\mbox{char}\, K \neq 2$. The decompositions required for the proposition are given by
\begin{align*}
\{x\in\roi_{F}:q(x)&\in b+t^{-R}\roi_{F},\;\res{q}'(\res{x})=0\}\\
	&=\{x\in\roi_{F}:x^2\in b+t^3\roi_{F},\;\res{x}=0\}\\
	&=\begin{cases}
		\varnothing & b\notin t^2\roi_{F},\\
		\varnothing & b\in t^2\roi_{F}\mbox{ but }\res{t^{-2}b}\notin K^2,\\
		b^{1/2}+t^2\roi_{F} \sqcup -b^{1/2}+t^2\roi_{F} & b\in t^2\roi_{F}\mbox{ and }\res{t^{-2}b}\in{\mult{K}}^2,\\
		t^2\roi_{F} & b\in t^3\roi_{F},
	\end{cases}
\end{align*}
where we use Hensel's lemma to take a square root in the third case. The associated residue field approximations in the final two cases are given by
\[\begin{array}{ccc}
\begin{CD}
b^{1/2}+t^2\roi_{F} @>X^2>>b+t^3\roi_{F} \\
@VVV  @VVV  \\
 K  @>>2\res{b^{1/2}t^{-1}}X>  K 
\end{CD}
&\,\,&
\begin{CD}
-b^{1/2}+t^2\roi_{F} @>X^2>>b+t^3\roi_{F} \\
@VVV  @VVV  \\
 K  @>>-2\res{b^{1/2}t^{-1}}X>  K 
\end{CD}\\
&&\\
\begin{CD}
t^2\roi_{F} @>X^2>>b+t^3\roi_{F} \\
@VVV  @VVV  \\
 K  @>>-\res{bt^{-3}}>  K .
\end{CD}
&&\\
\end{array}\]
Proposition \ref{proposition_integral_of_singular_sections} therefore implies that for $y\in F$,
\[\int^F\Phi_\sub{sing}(x,y)\,dx\\
	=\begin{cases}
		0 & y\notin t^{-1}\roi_{F}, \\
		0 & y\in t^{-1}\roi_{F}\mbox{ but }\res{ty}\notin K^2,\\
		\int_K f(0, -2\res{(yt)^{1/2}}u)\,du\,X^2\\+ \int_K f(0, 2\res{(yt)^{1/2}}u)\,du\,X^2 & y\in t^{-1}\roi_{F}\mbox{ and }\res{ty}\in{\mult{K}}^2,\\
		0 & y\in \roi_{F}.
	\end{cases}\]
Therefore $y\mapsto \int^F\Phi_\sub{sing}(x,y)\,dx$ is the lift of \[v\mapsto \int_K f(0, -2v^{1/2}u)+f(0, 2v^{1/2}u)\,du\,X^2\,\Char_{K^{\times 2}}(v)\] at $0,-1$.

This function of $F$ need not be integrable, but as in the previous remark, there is a good argument to suggest that it should be, and why its integral should be zero:

Indeed, the function on the residue field has the form \[J(v)=\sum_{\substack{\w\in K \\ \res{q}(\w)=v \\ \res{q}'(\w)\neq 0}}\int_K g(-\res{q}'(\w)u)\,du\] where $g$ is a Schwartz-Bruhat function on $K$. Now replace the integrand by $g(-\res{q}'(\w)u)\Char_{K}(\w)$ and appeal to the appendix to deduce \[\int J(v)dv=\int\int g(u)\Char_{K}(\w)\,d\w du.\] But arguing as in the proceeding remark, $\int_Kd\w=0$, and so $\int_K J(v)\,dv=0$. Of course, the argument is nonsense because $J$ is not integrable, but it should be after a suitable extension of the measure.
\end{example}
%
%
\section{Negative depth with $\res{q}$ purely inseparably}\label{section_purely_insep}

We maintain all notation introduced at the beginning of the previous section but drop the additional hypothesis that $\res{q}'$ is not everywhere zero. Instead, we now assume $K$ has positive characteristic $p$ and that $\res{q}(X)$ is purely inseparable.

Whereas in the previous section conjecture \ref{conjecture} could fail to hold because the integration theory is not yet sufficiently developed, causing functions not to be integrable, we will present a result now to show that if $\res{q}$ is purely inseparable then all required functions are integrable, but the conjecture is simply false!

First note that, in the notation of the previous section, $\res{q}'$ being everywhere zero implies $\Phi=\Phi_\sub{sing}$. Secondly, proposition \ref{proposition_integral_of_singular_sections} remains valid, so that $x\mapsto \Phi(x,y)$ is integrable for any $y\in F$ and we have an explicit description of its integral.

\begin{proposition}\label{proposition_R=-1_purely_insep}
Suppose $R=-1$. Then both repeated integrals of $\Phi$ are well-defined, but $f$ may be chosen so that \[\int^F\int^F\Phi(x,y)\,dxdy\neq\int^F\int^F\Phi(x,y)\,dydx.\]
\end{proposition}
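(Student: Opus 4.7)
The plan is to compute the two repeated integrals separately and exhibit a concrete mismatch when $f$ has nonzero Haar integral.

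First I would handle the $dy\,dx$ direction, which costs nothing. For each fixed $x \in F$, the function $y \mapsto f^0(x,y-t^{-1}q(x))$ is a translate of $y \mapsto f^0(x,y)$, so translation invariance of $\int^F$ (theorem \ref{theorem_main_properties_of_integral}) yields
\[\int^F f^0(x,y-t^{-1}q(x))\,dy = \int^F f^0(x,y)\,dy.\]
The right hand side, as a function of $x$, is the lift at $0,0$ of $u \mapsto \int_K f(u,v)\,dv$, since $f^0$ vanishes off $\roi_F \times \roi_F$ and restricts to $f(\res{x},\res{y})$ on $\roi_F \times \roi_F$. Hence by remark \ref{remark_integral_of_lifted_function} the $dy\,dx$ repeated integral exists and equals $\int_K\int_K f(u,v)\,dv\,du$; this is essentially the computation already carried out in the proof of lemma \ref{lemma_reduction}(iii)$\Leftrightarrow$(iv).

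Next I would show the $dx\,dy$ repeated integral vanishes identically. Because $\res{q}$ is purely inseparable, $\res{q}'$ is identically zero on $K$, so $W_\sub{ns} = \varnothing$ and consequently $\Phi_\sub{ns} = 0$ and $\Phi = \Phi_\sub{sing}$. Proposition \ref{proposition_integral_of_singular_sections} then expresses $\int^F\Phi(x,y)\,dx$ as the primed sum
\[\int^F \Phi(x,y)\,dx = {\sum_j^{}}' \int_K f(\res{a}_j,-\psi_j(u))\,du\,X^{c_j},\]
where the sum is restricted to those $j$ for which the residue field approximation $\psi_j$ is non-constant. Now the key point is that we are in the $A=1$ case (since $-R = 1$), and the first paragraph of the proof of proposition \ref{proposition_singular_decomposition} shows that every residue field approximation arising in the decomposition of $\{x \in \roi_F : q(x) \in y_0 + t\roi_F,\,\res{q}'(\res{x}) = 0\}$ is constant (this is just corollary \ref{corollary_R=-1}, whose statement I can invoke verbatim). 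Therefore the primed sum is empty and $\int^F \Phi(x,y)\,dx = 0$ for every $y \in F$. The zero function is trivially integrable with integral zero, so $\int^F\int^F\Phi(x,y)\,dx\,dy = 0$.

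Finally, to conclude, I would choose any Schwartz-Bruhat $f$ on $K \times K$ whose Haar integral is nonzero (for example $f(u,v) = \Char_U(u)\Char_V(v)$ for compact open $U, V \subset K$ in the non-archimedean case, or a positive bump function in the archimedean case). Then the first computation yields $\int_K\int_K f \neq 0$ while the second yields $0$, so the two repeated integrals disagree, proving the proposition. No step of this argument presents a genuine obstacle since all the hard work is already packaged in proposition \ref{proposition_singular_decomposition} and proposition \ref{proposition_integral_of_singular_sections}; the content of the proposition lies in simply observing that the purely inseparable hypothesis kills $\Phi_\sub{ns}$ entirely while, at depth $R = -1$, the constancy of all residue field approximations kills the $\Phi_\sub{sing}$ contribution to the inner $dx$ integral.
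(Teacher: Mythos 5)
Your argument is correct and follows the paper's proof essentially verbatim: compute the $dy\,dx$ integral as $\int_K\int_K f$ via the reduction already carried out in lemma \ref{lemma_reduction}, show the $dx\,dy$ integral vanishes because at depth $R=-1$ (i.e.\ $A=1$) every residue field approximation in proposition \ref{proposition_singular_decomposition} is constant so the primed sum in proposition \ref{proposition_integral_of_singular_sections} is empty, and then pick $f$ with nonzero Haar integral. One small wording caveat: you cannot literally invoke the \emph{statement} of corollary \ref{corollary_R=-1} ``verbatim'' (its conclusion is that $\Phi$ is Fubini, and it is stated under the standing hypothesis of section \ref{section_negative_depth} that $\res{q}'$ is not identically zero, which is exactly what fails here); what you are reusing, as the paper itself phrases it, is the \emph{argument} of its proof, not the corollary as stated.
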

\begin{proof}
Arguing exactly as in corollary \ref{corollary_R=-1} it follows that $\int^F\Phi_\sub{sing}(x,y)\,dx=0$ for all $y\in F$, and therefore $y\mapsto \int^F\Phi_\sub{sing}(x,y)\,dx=0$ is certainly integrable, with integral $0$. That is, \[\int^F\int^F\Phi(x,y)\,dxdy=0.\]

The $dydx$ integral of $\Phi$ was showed to make sense in lemma \ref{lemma_reduction} and have value \[\int^F\int^F\Phi(x,y)\,dxdy=\int_K\int_K f(u,v)\,dudv.\] To complete the proof simply choose $f$ to be any Schwartz-Bruhat function on $K\times K$ such that $\int_K\int_K f(u,v)dudv$ is non-zero.
\end{proof}

\begin{remark}
Of course, using the terminology of \ref{remark_unferocious_case}, we are essentially now studying the ramification of ferocious extensions.

Further, the integration theory of \cite{Morrow2008} is easily modified to allow integration on a complete discrete valuation field whose residue field is any infinite field equipped with discrete counting measure; this is an elementary form of motivic integration. In that situation one may ask similar question about changes of variables and Fubini's theorem; results are generally easier to prove and closer to the analogous results for a usual local field. If the residue field is perfect, then the pathologies exhibited in this section no longer exist.

Perhaps the most immediate relation between measure theory and imperfectness is that the set of $p^\sub{th}$ powers of $K$ have zero measure, in stark contrast with in a perfect field.
\end{remark}

\section{Summary and future work}\label{section_summary}

Let us summarise our main results on conjecture \ref{conjecture}. Given data $a_1,a_2,n_1,n_2,h$ for the conjecture, let $q$ be the associated normalised polynomial and $R$ the depth; then:

\begin{enumerate}
\item If $\deg h\,(=\deg q)\le 1$ then the conjecture is true (theorem \ref{theorem_linear_case}).
\item If $R\ge 0$ then the conjecture is true (theorem \ref{theorem_R_positive})
\item If $\res{q}'$ is no-where vanishing on $ K $ then the conjecture is true (corollary \ref{corollary_nowhere_singular}).
\item If $R=-1$ and $\res{q}$ is not purely inseparable, then the conjecture is true (lemma \ref{lemma_reduction} $+$ corollary \ref{corollary_R=-1}).
\item If $R<-1$ and $\res{q}$ is not purely inseparable, then $y\mapsto\int^F\Phi(x,y)\,dx$ may fail to be integrable and so the conjecture may fail; it appears that it is possible to increase the space of integrable functions so that the conjecture becomes true (remark \ref{remark_nonsense} $+$ example \ref{example_nonsense}).
\item If $R=-1$ but $\res{q}$ is purely inseparable, then the conjecture fails and would continue to fail even if we increased the scope of the integral (section \ref{section_purely_insep}).
\item If $R<-1$ but $\res{q}$ is purely inseparable, then similarly to case (v) calculations become difficult. We have not included examples, but in all cases which the author can explicitly evaluate, $\int^F\int^F\Phi(x,y)\,dxdy=0$, Thus the conjecture seems to fail as in (vi).
\end{enumerate}

There are some immediate directions which demand study:

\begin{enumerate}
\item The integral must be extended to a wider class of functions so that the nonsense manipulations in remark \ref{remark_nonsense} and example \ref{example_nonsense} become valid. We should consider more general changes of coordinates on $F\times F$ than $(x,y)\mapsto (x,y-h(x))$. Similar methods to those in this paper will be required: firstly one needs to approximate the transformation at the level of $K\times K$ and find a suitable decomposition. This will lead to integrals over $K$ which can be explicitly evaluated as well as some functions on $F$; these functions on $F$ will perhaps be within the scope of the integral, or instead will provide further impetus for extending the integral.

\item The second issue concerns ramification. The imprecise relations between integration and ramification (it seems to be more useful to take the point of view of monodromy) which we have had occasion to mention should be made concrete. Perhaps this will be possible through a suitable theory of two-dimensional local zeta functions, just as the one-dimensional theories of local zeta functions and Igusa zeta functions capture ramification data of abelian extensions and monodromy of singularities. An enjoyable introduction to some of these topics, in dimension one, may be found in \cite{Nicaise2009}
\end{enumerate}

Also, the following seems worth consideration:

\begin{remark}
Let us return to the situation of remark \ref{remark_ramification_theory_in_perfect_case}, supposing that $L/M$ is a finite, Galois extension of complete, discrete valuation fields with perfect residue fields. The definition of the Hasse-Herbrand function implies that \[\frac{d\psi_{L/M}}{da}(a)=e_{L/M}^{-1}|G^a|^{-1},\] at least away from the ramification breaks, and therefore that \[\psi_{L/M}(a)=e_{L/M}^{-1}\int_{-1}^a|G^x|^{-1}\,dx-1,\] since both sides are $=-1$ at $a=-1$. But $|G:G^x|=|\pi_0(X_{L/M}^{x+1})|$, and so \[\psi_{L/M}(a)=f_{L/M}^{-1}\int_0^{a+1}|\pi_0(X_{L/M}^x)|\,dx-1\tag{$\ast$}\] for all $a\ge -1$.

If we think of `the number of connected components' as a measure, then ($\ast$) is a repeated integral taken over certain fibres, and it is exactly the variation of the fibres which contributes to the interesting structure of the Hasse-Herbrand function. Whether this repeated integral interpretation of ramification can be more systemically exploited in the local setting is an interesting question.
\end{remark}

\begin{appendix}
\section{Evaluation of an important integral on $K$}

Let $K$ be a local field, $f$ a Fubini function of $K\times K$, and $\psi\in K[X]$ a polynomial with $\psi'$ not everywhere zero. We discuss the function of $K$ given by \[J(v)=\sum_{\substack{\w\in K \\ \psi(\w)=v \\ \psi'(\w)\neq 0}}\int_K f(\w,-\psi'(\w)u)\,du.\] Note that the assumption that $f$ is Fubini implies that $J$ is defined (i.e. not infinite) for all $v$. We will prove the following:

\begin{proposition}
The function $J$ is integrable on $K$, with \[\int_K J(v)\,dv=\int_K\int_K f(\w,u)\,d\w du.\]
\end{proposition}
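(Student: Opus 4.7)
The plan is to reduce the proposition to a standard coarea formula for polynomial maps on a one-dimensional local field. First I would simplify $J$ by a change of variable in the inner integral. For each $\omega$ with $\psi'(\omega)\neq 0$, translation-dilation invariance of Haar measure on $K$ gives
\[
\int_K f(\omega,-\psi'(\omega)u)\,du = |\psi'(\omega)|_K^{-1}F(\omega), \quad F(\omega):=\int_K f(\omega,u)\,du,
\]
where $|\cdot|_K$ denotes the modulus. Hence
\[
J(v) = \sum_{\omega\in S(v)}\frac{F(\omega)}{|\psi'(\omega)|_K},\qquad S(v):=\{\omega\in K:\psi(\omega)=v,\;\psi'(\omega)\neq 0\}.
\]
Since $f$ is Fubini on $K\times K$, the function $F$ is Haar integrable on $K$ with $\int_K F(\omega)\,d\omega=\int_K\int_K f(\omega,u)\,du\,d\omega$, so it remains only to show
\[
\int_K F(\omega)\,d\omega \;=\; \int_K J(v)\,dv.
\]

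Next I would verify this identity as a coarea formula for the polynomial $\psi$. The critical set $C:=\{\psi'=0\}$ is finite, hence of Haar measure zero, and may be discarded from the domain of integration. On $K\setminus C$, the map $\psi$ is a local diffeomorphism with Jacobian modulus $|\psi'(\omega)|_K$; after change of variable $v=\psi(\omega)$, summing over the (finitely many) sheets lying over a generic $v$, one obtains
\[
\int_{K\setminus C} F(\omega)\,d\omega \;=\; \int_K \sum_{\omega\in S(v)}\frac{F(\omega)}{|\psi'(\omega)|_K}\,dv \;=\; \int_K J(v)\,dv,
\]
which simultaneously establishes integrability of $J$ and the required equality. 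In the archimedean cases $K=\mathbb{R},\mathbb{C}$ this is a standard consequence of the classical coarea/change-of-variables formula.

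The main obstacle is justifying the coarea step in the non-archimedean case. Here the plan is to decompose $K\setminus C$ as a disjoint union of balls $B_i$ on each of which $\psi$ is injective, $\psi(B_i)$ is again a ball, and $|\psi'|_K$ is a constant, say $\gamma_i$; then $\psi:B_i\to\psi(B_i)$ is a bijection rescaling Haar measure by $\gamma_i$, so $\int_{B_i}F(\omega)\,d\omega=\gamma_i^{-1}\int_{\psi(B_i)}F(\psi^{-1}(v))\,dv$, and one sums over $i$ using absolute integrability of $F$ to regroup by fibres of $\psi$. Such a decomposition follows from a Hensel-style argument entirely parallel to Proposition \ref{proposition_non_singular_decomposition}: around any $\omega_0\notin C$ one finds a ball on which the Taylor expansion of $\psi$ is dominated by its linear term, giving the required injectivity and constancy of $|\psi'|_K$. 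Interchange of summation and integration, and measurability of the sum-over-fibres function, are routine once the balls are chosen so that for each $v$ only finitely many $B_i$ meet $\psi^{-1}(v)$ (which holds because $\deg\psi$ bounds the cardinality of generic fibres).
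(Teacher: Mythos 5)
Your proof is correct and ultimately rests on the same ingredients as the paper's (inverse function theorem away from critical points, change of variables along the sheets of $\psi$, countable additivity), but the bookkeeping is organised differently and the difference is worth noting. The paper works on the codomain side: around a value $v_0$ it produces a small disc $V$ and branches $\lambda_i:V\to\Omega_i$ of $\psi^{-1}$, then needs an extra compactness argument to guarantee that after shrinking $V$ the $\lambda_i(v)$ exhaust all non-singular solutions of $\psi(Y)=v$ for every $v\in V$, and finally partitions $\psi(K\setminus\Sigma)$ into countably many such pieces. You instead first collapse the inner $u$-integral, reducing the proposition to the scalar coarea identity
\[
\int_{K\setminus C} F(\omega)\,d\omega \;=\; \int_K \sum_{\omega\in S(v)} \frac{F(\omega)}{|\psi'(\omega)|_K}\,dv,\qquad F(\omega)=\int_K f(\omega,u)\,du,
\]
and prove it by partitioning the domain $K\setminus C$ into balls on which $\psi$ is injective with $|\psi'|_K$ constant. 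This domain-side decomposition has the modest advantage of dispensing with the compactness step — each ball automatically contributes at most one preimage per fibre, and the bound by $\deg\psi$ is immediate — and it sits naturally alongside the paper's own Propositions \ref{proposition_non_singular_decomposition} and \ref{proposition_singular_decomposition}. The regrouping by fibres is justified, as you say, by absolute integrability of $F$; both proofs then coincide in the final countable-additivity argument.
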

\begin{proof}
The proof is an exercise in analysis over a local field. Let $\Sigma=\{x:\psi'(x)=0\}$ be the finite set of singular points of $\psi$.

Let $v_0\in K$ and assume that there is a non-singular solution to $\psi(Y)=v_0$. The inverse function theorem for complete fields (see e.g. \cite{Igusa2000}) implies that there exists an open disc $V\ni v_0$, open discs $\Omega_1,\dots,\Omega_n$, and $K$-analytic maps $\lambda_i:V\to \Omega_i$, $i=1,\dots,n$ (that is, representable in $V$ by a convergent power series) with the following properties:
\begin{enumerate}
\item $\Omega_1,\dots,\Omega_n$ are pairwise disjoint;
\item $\psi(\Omega_i)=V$ for each $i$; moreover, $\psi|_{\Omega_i}$ and $\lambda_i$ are inverse diffeomorphisms between $\Omega_i$ and $V$;
\item the non-singular solutions in $K$ to $\psi(Y)=v_0$ are $Y=\lambda_1(v_0),\dots,\lambda_n(v_0)$.
\end{enumerate}
Moreover, we claim that, possibly after shrinking the sets $\Omega_i$, $V$, we may further assume
\begin{itemize}
\item[(iv)] for any $v\in V$, the non-singular solutions in $K$ to $\psi(Y)=v$ are $Y=\lambda_1(v),\dots,\lambda_n(v)$.
\end{itemize}
For if not, then there would exist a sequence $(x_n)_n$ in $K$ such that $x_n\notin\bigcup_i\Omega_i$ for all $n$ and $\psi(x_n)\to v_0$; the relative compactness of $\psi^{-1}(V)$ now allows us to pass to a convergent subsequence of $(x_n)$, giving an element $x\in K\setminus\bigcup_i \Omega_i$ which satisfies $\psi(x)=v_0$. But this contradicts (iii) and so proves our claim. Informally, the $\lambda_i$ parametrise the non-singular solutions of $\psi(Y)=v$, for $v\in V$.

For $v\in V$, we deduce that \[J(v)=\int \sum_{i=1}^n f(\lambda_i(v),-\psi'(\lambda_i(v))u)\,du\] and so
\begin{align*}
	\int_VJ(v)\,dv
	&=\sum_{i=1}^n\int_K\int_V  f(\lambda_i(v),-\psi'(\lambda_i(v))u)\,dvdu\\
	&=\sum_i\int_K\int_V\abs{\psi'(\lambda_i(v))}^{-1}f(\lambda_i(v),u)\,dvdu\\
	&=\sum_i\int_K\int_{\Omega_i} f(\w,u)\,d\w du\\
	&=\int_K\int_{\psi^{-1}(V)} f(\w,u)\,d\w du
\end{align*}
by Fubini's theorem and an analytic change of variables $v=\psi(\w)$. An elementary introduction to change of variables in integrals over non-archimedean fields may be found in \cite{Vladimirov1994}.

If $J$ is replaced by $J\Char_{A}$ for any measurable subset $A\subseteq V$ then this working is easily modified to show \[\int_AJ(v)\,dv=\int_K\int_{\psi^{-1}(A)} f(\w,u)\,d\w du\tag{$\ast$}.\]

It is now easy to see that $\psi(K\setminus \Sigma)$ admits a partition into countably many Borel sets $(A_j)_{j=1}^{\infty}$ where ($\ast$) holds with $A_j$ in place of $A$ for each $A$. Therefore
\begin{align*}
	\int_K J(v)\,dv
	&=\sum_j\int_{A_j} J(v)\,dv\\
	&=\sum_j\int_K\int_{\psi^{-1}(A_j)} f(\w,u)\,d\w du\\
	&=\int_K\int_{\Omega}f(\w,u)\,d\w du
\end{align*}
where $\Omega=\psi^{-1}(\psi(K\setminus\Sigma))=K\setminus \psi^{-1}(\psi(\Sigma))$ differs from $K$ only by a finite set. So we have reached the desired result: \[\int_K J(v)\,dv=\int_K\int_Kf(\w,u)\,d\w du.\]
\end{proof}
\end{appendix}

\affiliationone{
	Matthew Morrow\\
	Maths and Physics Building,\\
	University of Nottingham,\\
	University Park,\\
	Nottingham\\
	NG7 2RD\\
	United Kingdom\\
   	\email{matthew.morrow@maths.nottingham.ac.uk}}
\end{document}